  \tikzset{
    >=latex,
%   Nodo/.style={fill=white,draw=black,thick},
    bnode/.style={circle,fill=black,draw=black},
    middlearrow/.style={	%arrow in the middle of the line
	  decoration={markings, mark= at position 0.55 with {\arrow{#1}}},
	  postaction={decorate}
	  },
    double arrow/.style args={#1 colored by #2 and #3}{ %code for double arrows taken online
	  -stealth,line width=#1,#2, % first arrow
	  postaction={draw,-stealth,#3,line width=(#1)/3,
                shorten <=(#1)/3,shorten >=2*(#1)/3}, % second arrow
	  }
	  
  }
\tikzset{
        hatch distance/.store in=\hatchdistance,
        hatch distance=10pt,
        hatch thickness/.store in=\hatchthickness,
        hatch thickness=2pt
    }
\pgfqpoint{\hatchdistance}{\hatchdistance}}
\pgfqpoint{\hatchdistance}{-\hatchdistance}}
\newcommand{\R}{\ensuremath{\mathbb{R}}}%
\newcommand{\N}{\ensuremath{\mathbb{N}}}%
\newcommand{\sphere}{\ensuremath{\mathbb{S}}}%
\newcommand{\acts}{\ensuremath{\curvearrowright}}%
\renewcommand\epsilon\varepsilon
\newcommand{\wdist}[1]{{\delta^{#1}_\Gamma}} 
\newcommand{\wLdist}[1]{{\delta^{#1}_\Lambda}} 
\newcommand{\shortacts}{\,\text{\scalebox{0.8}[1]{$\curvearrowright $}}\,}
\newcommand{\CG}{\mathcal{G}}
\newcommand{\CN}{\mathcal{N}}
\newcommand{\CO}{\mathcal{O}}
\newcommand{\CP}{\mathcal{P}}
\newcommand{\NN}{\mathbb{N}}
\newcommand{\RR}{\mathbb{R}}
\newcommand{\ZZ}{\mathbb{Z}}
\newcommand{\Bigmid}{\mathrel{\Big|}}
\DeclarePairedDelimiter\abs{\lvert}{\rvert}	
\DeclarePairedDelimiter\norm{\lVert}{\rVert}	
\theoremstyle{definition}
\newtheorem{thmA}{Theorem}
\newtheorem{thm}{Theorem}[section]
\newtheorem{dfn}[thm]{Definition}
\newtheorem{lem}[thm]{Lemma}
\newtheorem{prop}[thm]{Proposition}
\newtheorem{cor}[thm]{Corollary}
\newtheorem{rem}[thm]{Remark}
\author[de Laat]{Tim de Laat}
\address{Tim de Laat, Westf\"alische Wilhelms-Universit\"at M\"unster, Germany}
\email{tim.delaat@uni-muenster.de}
\author[Vigolo]{Federico Vigolo}
\address{Federico Vigolo, University of Oxford, United Kingdom}
\email{federico.vigolo@maths.ox.ac.uk}
\title{Superexpanders from group actions on compact manifolds}
\begin{document}

\begin{abstract}
  It is known that the expanders arising as increasing sequences of level sets of warped cones, as introduced by the second-named author, do not coarsely embed into a Banach space as soon as the corresponding warped cone does not coarsely embed into this Banach space. Combining this with non-embeddability results for warped cones by Nowak and Sawicki, which relate the non-embeddability of a warped cone to a spectral gap property of the underlying action, we provide new examples of expanders that do not coarsely embed into any Banach space with nontrivial type. Moreover, we prove that these expanders are not coarsely equivalent to a Lafforgue expander. In particular, we provide infinitely many coarsely distinct superexpanders that are not Lafforgue expanders. In addition, we prove a quasi-isometric rigidity result for warped cones.
\end{abstract}

\maketitle

\section{Introduction}

Expanders are sequences of finite sparse highly connected graphs with a growing number of vertices (see Section \ref{subsec:expanders} for the definition). It is a well-known result of Gromov \cite{MR1826251} that expanders do not coarsely embed into a Hilbert space or into $\ell^p$, with $1 \leq p < \infty$ (see also \cite{MR1978492,MR1489105}). It is a deep open problem whether there exist expanders that can be embedded coarsely into some superreflexive Banach space (recall that a Banach space is superreflexive if and only if it is isomorphic to a uniformly convex Banach space). In \cite{MR2732331}, Pisier introduced the class of uniformly curved Banach spaces and showed that expanders are not coarsely embeddable into such spaces. This was a great step forward, as there are no known examples of superreflexive Banach spaces that are not uniformly curved. However, the aforementioned problem remains open.

An expander that does not coarsely embed into any uniformly convex space is called a \emph{superexpander} (see also Remark \ref{rem:superexpanders}). An expander is said to be of Margulis type if it arises as a sequence of Cayley graphs of finite quotients of a group $\Gamma$. In relation to his groundbreaking work on the Baum-Connes conjecture, Lafforgue provided the first examples of superexpanders. Indeed, in \cite{MR2423763} (see also \cite{MR2574023}), he introduced a strengthening of property (T), named strong Banach property (T), which implies a very strong obstruction to coarse embeddability of Margulis type expanders coming from groups with this property. Indeed, he proved that $\mathrm{SL}(3,F)$, where $F$ is a non-Archimedean local field, and cocompact lattices in this group have strong Banach property (T), which implies that the Margulis type expanders coming from such lattices do not coarsely embed into any Banach space with nontrivial type (see Section \ref{subsec:geometry.of.Banach.spaces} for the definition of type). In particular, this implies that these expanders are superexpanders, since the class of Banach spaces with nontrivial type is known to be larger than the class of uniformly convex spaces. More examples of groups with strong property (T) or other Banach space versions of property (T) relative to various classes of Banach spaces were provided in \cite{MR3190138,MR3474958,MR3407190,MR3533271,MR3781331,MR3606450}. Other examples of superexpanders, obtained by means of a combinatorial construction, were provided in the groundbreaking work of Mendel and Naor \cite{MR3210176}.

In \cite{MR2115671}, Roe introduced the notion of warped cone. This construction yields a geometric object from an action of a group on a compact metric space. Recently, the coarse geometry of warped cones has attracted an increasing amount of attention (see e.g.~\cite{drutunowak,sawicki1,MR3577880,sawickiwu,wangwang}). In \cite{MR3577880}, Nowak and Sawicki proved that warped cones for which the underlying action has spectral gap are difficult to coarsely embed into Banach spaces. In particular, if the warped cone is constructed from an action of a group with strong Banach property (T), then the warped cone does not coarsely embed into any Banach space with nontrivial type. Nowak and Sawicki also considered actions with spectral gap of groups without property (T).

More recently, the second-named author investigated the approximation of actions on measure spaces by means of finite graphs \cite{vigolo1}. One of his main results is that ``increasing sequences of level sets'' of warped cones constructed from an action that is ``expanding in measure'' are uniformly quasi-isometric to an expander. By combining this result with the aforementioned result of Nowak and Sawicki, one can deduce that expanders arising as the level sets of warped cones constructed from actions of groups with strong Banach property (T) do not coarsely embed into any Banach space with nontrivial type. This result appeared first without proof in a thesis of the second-named author \cite[Remark 7.12]{vigolothesis}, and it was obtained independently by
Sawicki \cite{sawicki2}. For completeness, we will provide a short proof of this result. In fact, we prove a slightly more general non-embeddability result for approximating graphs (see Proposition \ref{prop:uniform.gap.implies.no.embedding}), for which we do not need the warped cone construction.

The main aim of this article is to study a class of examples of expanders that arise as increasing sequences of level sets of warped cones constructed from actions of groups with strong Banach property (T). As an explicit example of such an action, we consider the natural action of $\mathrm{SO}\big(d,\mathbb{Z}[\frac{1}{5}]\big)$ on the compact Riemannian manifold $\mathrm{SO}(d,\mathbb{R})$. It follows from the aforementioned non-embeddability result that these expanders do not coarsely embed into any Banach space with nontrivial type. In particular, they are superexpanders. We also prove that they are not coarsely equivalent to a Lafforgue expander. Our main result can be formulated as follows.
\begin{thmA} \label{thm:infinitelymanysuperexpanders}
  There exist infinitely many coarsely distinct superexpanders that are not coarsely equivalent to a Lafforgue expander.
\end{thmA}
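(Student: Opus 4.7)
The plan is to exhibit an explicit infinite family of expanders, each arising as an increasing sequence of level sets of a warped cone of an action of a group with strong Banach property (T), and to distinguish them coarsely from each other and from Lafforgue expanders by a coarse invariant detecting the dimension of the underlying compact manifold.

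For each integer $d$ large enough that $\Gamma_d = \mathrm{SO}\big(d,\mathbb{Z}[\tfrac{1}{5}]\big)$ has strong Banach property (T), I would consider the natural left-multiplication action of $\Gamma_d$ on the compact Riemannian manifold $M_d = \mathrm{SO}(d,\mathbb{R})$. Strong Banach property (T) of $\Gamma_d$ forces the Koopman representation on $L^2_0(M_d)$ to have a spectral gap, whence the action is expanding in measure in the sense of \cite{vigolo1}. The graph-approximation theorem of \cite{vigolo1} then guarantees that an increasing sequence of level sets of the warped cone $\mathcal{O}_{\Gamma_d}(M_d)$ is uniformly quasi-isometric to an expander $X_d$. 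Combining this with the Nowak--Sawicki non-embeddability result via Proposition~\ref{prop:uniform.gap.implies.no.embedding}, I obtain that $X_d$ does not coarsely embed into any Banach space of nontrivial type; in particular each $X_d$ is a superexpander.

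The remaining task is to prove that the $X_d$ are pairwise coarsely inequivalent and coarsely inequivalent to any Lafforgue expander. My strategy is to use a coarse invariant of the disjoint-union sequence that detects the dimension of $M_d$: at scales intermediate between the orbit scale and the diameter of each level set, the uniform quasi-isometry of level sets with rescaled patches of $M_d$ means that $X_d$ contains large, uniformly bi-Lipschitz copies of $d$-dimensional Euclidean balls. Formalising this, for instance as the supremum of those $n$ for which the sequence admits uniformly bi-Lipschitz embeddings of $[0,R]^n$ with $R \to \infty$ into balls of radius $O(R)$, one reads off a lower bound of $d$ for $X_d$. For a Lafforgue expander, being a box space of a cocompact lattice in $\mathrm{SL}(3,F)$ whose local geometry is modelled on a Bruhat--Tits building of uniform exponential growth without bounded-codimension flats, the same invariant is bounded by an absolute constant. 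Comparison of values then yields all the required coarse inequivalences, and any infinite choice of admissible $d$'s gives the theorem.

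The principal obstacle is twofold: rigorously extracting this dimension invariant at the level of sequences of finite graphs and showing it is a coarse invariant, and establishing the complementary upper bound for Lafforgue expanders. For the first point, I would build on the quasi-isometric rigidity theorem for warped cones proved later in this paper, transferring it to the approximating graph sequences via the uniform quasi-isometry between level sets and rescaled manifolds. For the second, quantitative control on the local geometry of Cayley graphs of $\mathrm{SL}(3,F)$-lattice quotients is needed in order to rule out large, uniformly bi-Lipschitz Euclidean patches; this should follow from the uniform hyperbolicity of the building together with standard volume-entropy estimates.
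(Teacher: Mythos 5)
The construction half of your proposal is exactly the paper's: $\Gamma_d=\mathrm{SO}\big(d,\mathbb{Z}[\frac{1}{5}]\big)$ acting on $\mathrm{SO}(d,\mathbb{R})$, spectral gap from strong Banach property (T), and Proposition~\ref{prop:uniform.gap.implies.no.embedding} to obtain superexpanders. The gap is in the distinguishing half. First, your invariant as stated only comes with a \emph{lower} bound for $X_d$: to conclude $X_d\not\sim X_{d'}$ you also need an upper bound on the dimension of uniformly (quasi-isometrically, not bi-Lipschitzly --- bi-Lipschitz is not preserved by coarse equivalence) embedded Euclidean cubes in $X_d$, and that is precisely the hard part. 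It amounts to quasi-flat rigidity for the local model, which is not $\mathbb{R}^{\dim M_d}$ but $\Gamma_d\times\mathbb{R}^{\dim M_d}$, i.e.\ a product of a Euclidean factor with a Bruhat--Tits building (note also that $\dim\mathrm{SO}(d,\mathbb{R})=\frac{d(d-1)}{2}$, not $d$). Second, and more seriously, your claim that the invariant is bounded by an absolute constant on Lafforgue expanders is false: higher-rank Bruhat--Tits buildings are not hyperbolic; they contain isometrically embedded flats of dimension equal to the rank, and since Lafforgue expanders arise from cocompact lattices in almost simple groups of arbitrary higher rank, the maximal flat dimension is unbounded over that class. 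So a flat-dimension count cannot separate your expanders from Lafforgue expanders, and the appeal to ``uniform hyperbolicity of the building'' does not stand.

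The paper's route avoids both problems. Lemmas~\ref{lem:balls.lipschitz.to.Euclidean} and~\ref{lem:small.singular.sets} show that balls of any fixed radius in the level sets, based at points avoiding the singular sets $\chi_\Gamma^t(r)$ (this is where essential freeness enters), are bi-Lipschitz to balls in $\Gamma\times\mathbb{R}^{\dim M}$; a diagonal limiting argument (Theorem~\ref{thm:qi.rigidity.of.warped.cones}) then upgrades a uniform coarse equivalence of two such sequences to a genuine quasi-isometry $\Gamma\times\mathbb{Z}^{\dim M}\to\Lambda\times\mathbb{Z}^{\dim N}$, respectively $\to\Lambda$ in the box-space case. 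The $X_d$ are then separated by the Kleiner--Leeb splitting theorem, and Lafforgue expanders are excluded because quasi-isometric rigidity of higher-rank lattices would force $\Gamma\times\mathbb{Z}^{\dim M}$ to be such a lattice, which it cannot be since it fails property (T). The decisive point is thus the $\mathbb{Z}^{\dim M}$ direct factor in the global quasi-isometry type of the local model, not the dimension of its flats. If you wish to pursue your invariant, you must replace the hyperbolicity claim by genuine quasi-flat rigidity for products of buildings with Euclidean factors, at which point you have essentially reconstructed the paper's argument.
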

Let us point out that, by cardinality arguments, one can easily construct a continuum of non\=/equivalent superexpanders (Proposition~\ref{prop:cardinality.and.non.coarse.equivalence}). Still, using such ways to artificially create ``inequivalent'' expanders is rather unsatisfactory. More interestingly, we can produce countably many superexpanders that are not even ``locally'' coarsely equivalent (see Remarks \ref{rem:common.subsequences} and \ref{rem:local.qi.rigidity}).

The possibility of constructing new examples of superexpanders from warped cones was already considered by Sawicki \cite{sawicki2}, but he did not provide explicit examples of new superexpanders or prove that the expanders arising in this way are not coarsely equivalent to a Lafforgue expander.

The methods that we use in order to prove that expanders are not coarsely equivalent are inspired by the work of Khukhro and Valette \cite{MR3658731}. On our way towards Theorem \ref{thm:infinitelymanysuperexpanders}, we prove some general facts about coarse equivalences between expanders, and we prove the following quasi\=/isometric rigidity result for warped cones.

\begin{thmA} \label{thm:intro.qi.rigidity}
 Let $\Gamma\curvearrowright M$ and $\Lambda\curvearrowright N$ be essentially free actions by isometries on Riemannian manifolds. If the associated warped cones are quasi\=/isometric then $\Gamma\times\ZZ^{\dim(M)}$ is quasi\-/isometric to $\Lambda\times\ZZ^{\dim(N)}$.
\end{thmA}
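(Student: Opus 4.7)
The strategy is to identify $\Gamma\times\ZZ^{\dim(M)}$ as a quasi-isometry invariant of the warped cone $\CO_\Gamma M$, realised as a kind of ``tangent space at infinity''. The argument splits into an intrinsic local structure result for warped cones and a transfer argument that propagates a given quasi-isometry to these tangent spaces.

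\emph{Step 1: local structure.} The key lemma would state that, for every $R>0$ and $\epsilon>0$, there exists $T=T(R,\epsilon)>0$ such that whenever $x\in M$ lies in the free locus of $\Gamma\acts M$ (which has full measure) and $t\geq T$, the ball $B_R((x,t))\subset\CO_\Gamma M$ is $(1+\epsilon)$-bi-Lipschitz equivalent to the ball of radius $R$ around the origin in $\Gamma\times T_xM$, where $\Gamma$ carries the word metric of the finite generating set used in the warping and $T_xM\cong\RR^{\dim(M)}$ carries the Euclidean inner product. This is a careful metric computation: since $\Gamma$ acts by isometries, one derives the formula
\[
  d_{\mathrm{warp}}\bigl((z,t),(z',t)\bigr)\approx\inf_{g\in\Gamma}\bigl(|g|_\Gamma+t\cdot d_M(gz,z')\bigr),
\]
and for $z,z'$ close enough to $x$ the infimum is achieved (up to small error) by the ``obvious'' decomposition into an $M$-geodesic followed by a word on generators. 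The exponential map at $x$ then converts $t\cdot d_M$ on a ball of $M$-radius $R/t$ into the Euclidean metric on $T_xM$, with curvature error of order $R^2/t$. Essential freeness is precisely what controls the separation of distinct $\Gamma$-orbits at the base point: for generic $x$ and every $g\in B_\Gamma(R)\setminus\{e\}$ one has $d_M(gx,x)$ uniformly bounded below, so the $|B_\Gamma(R)|$ ``sheets'' $\{gx:|g|_\Gamma\leq R\}$ remain disjoint in $M$ at the scales in play and the $\Gamma$-factor does not collapse to a proper quotient.

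\emph{Step 2: transfer across the quasi-isometry.} Let $\phi\colon\CO_\Gamma M\to\CO_\Lambda N$ be a quasi-isometry with constants $(L,C)$, and pick $p_n=(x_n,t_n)$ with $t_n\to\infty$ and $x_n$ in the free locus. Because $N$ is compact, the only way for a sequence in $\CO_\Lambda N$ to leave every bounded set is to have its cone level tend to infinity; together with $\phi$ being coarsely proper, this forces the cone level of $q_n:=\phi(p_n)=(y_n,s_n)$ to diverge, after a harmless perturbation moving $y_n$ into the free locus of $\Lambda\acts N$ (a full-measure condition). Applying Step~1 in both source and target identifies $B_R(p_n)$ with a ball of radius $R$ in $\Gamma\times\RR^{\dim(M)}$ and $B_{LR+C}(q_n)$ with a ball of radius $LR+C$ in $\Lambda\times\RR^{\dim(N)}$, up to a $(1+\epsilon)$-bi-Lipschitz error, uniformly in $n$. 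A pointed ultralimit (or diagonal pointed Gromov--Hausdorff) argument along $R\to\infty$ then converts $\phi$ into an $(L,C)$-quasi-isometry $\Gamma\times\RR^{\dim(M)}\to\Lambda\times\RR^{\dim(N)}$; running the same procedure on a coarse inverse of $\phi$ produces a quasi-inverse. Since $\RR^n$ is quasi-isometric to $\ZZ^n$, the theorem follows.

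\emph{Main obstacle.} The technical heart of the argument lies in Step~1, specifically in the uniform $(1+\epsilon)$-bi-Lipschitz equivalence. One must rule out optimal warped paths that interleave many tiny cone excursions with generator jumps in a way that exploits the $\Gamma$-action to create shortcuts unavailable in the naive decomposition; this is exactly where essential freeness, the isometric nature of the action, and the Riemannian geometry of $M$ (a positive injectivity radius together with a uniform lower bound on $d_M(gx,x)$ for $|g|_\Gamma\leq R$ at generic $x$) are used in an essential way. Once Step~1 is in place, Step~2 is a fairly standard ultralimit construction, modulo the easy observation that cone levels must diverge under $\phi$.
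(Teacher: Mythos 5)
Your overall strategy is the same as the paper's: model balls in high levels of the warped cone, up to $(1+\epsilon)$\-/bi\-/Lipschitz error, on balls in $\Gamma\times\RR^{\dim(M)}$ (the paper's Lemma \ref{lem:balls.lipschitz.to.Euclidean}), transfer these models through the given quasi\-/isometry, and pass to a limit of the resulting uniform quasi\-/isometric embeddings of balls. However, Step 2 has a genuine gap at the point where you dismiss the choice of base point in the target as ``a harmless perturbation moving $y_n$ into the free locus \dots (a full-measure condition)''. Membership in the free locus is not the relevant condition. What the local model requires of the centre $y$ at level $s$ is the \emph{quantitative} condition $y\notin\chi_\Lambda^{s}(Lr+A)$, i.e.\ that no $\lambda\in\Lambda$ of word length at most $6(Lr+A)$ moves $y$ by less than $6(Lr+A)/s$; this is an open set of small but positive measure, not a null set. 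Moreover, a bounded perturbation in the warped metric need not escape it: a warped ball of radius $\rho$ at level $s$ has Riemannian volume only about $v_N(\rho/s)\to 0$, so a subset of $N$ of vanishing measure can still contain warped balls of every fixed radius at high levels. For the same reason, your Step 1 claim that $T$ depends only on $(R,\epsilon)$ uniformly over the free locus is false as stated: ``generic $x$ has $d_M(gx,x)$ uniformly bounded below'' conflates full measure with a uniform lower bound, and it is exactly this distinction that makes the base-point selection nontrivial.

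What is missing is an argument producing, for each large $k$, a single point $x_k$ with $x_k\notin\chi_\Gamma^{t_k}(r)$ \emph{and} $f_k(x_k)\notin\chi_\Lambda^{s_k}(Lr+A)$ simultaneously. The paper does this in Lemma \ref{lem:small.singular.sets} by a counting argument: take a separated and dense net $Y_k$ in $(M,\wdist{t_k})$, show via volume comparison (Lemma \ref{lem:volume.comparison.balls}) that the proportion of net points near $\chi_\Gamma^{t_k}(r)$ tends to $0$, push $Y_k$ forward to a separated, coarsely dense net in $(N,\wLdist{s_k})$, and compare cardinalities and volumes on both sides to conclude that the image of the bad part of the net cannot cover the complement of $\chi_\Lambda^{s_k}(Lr+A)$. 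Some such quantitative step is unavoidable and is where essential freeness is actually consumed; with it inserted, the rest of your argument (the exponential-map computation, the divergence of cone levels of $\phi(p_n)$, and the ultralimit/diagonal extraction, which is equivalent to the paper's pointwise finiteness argument since the targets are uniformly discrete) goes through.
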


We would also like to point out that one can use approximating graphs to obtain expanders whose vertex sets have arbitrary cardinality, as is stated in the following result.

\begin{thmA}\label{thm:intro.arbitrary.cardinality}
 For every increasing sequence $(c_n)$ of natural numbers, there exists a superexpander such that the $n$\=/th graph in the sequence has $c_n$ vertices.
\end{thmA}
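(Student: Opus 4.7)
The plan is to exploit the freedom in the approximating graphs construction of \cite{vigolo1} to produce superexpanders with any prescribed sequence of cardinalities. Fix once and for all an essentially free, expanding in measure action $\Gamma \curvearrowright M$ by isometries on a compact Riemannian manifold with $\Gamma$ having strong Banach property (T) --- for instance, the action of $\mathrm{SO}(d, \mathbb{Z}[\tfrac{1}{5}])$ on $\mathrm{SO}(d, \RR)$ that underlies the proof of Theorem~\ref{thm:infinitelymanysuperexpanders}. Recall that to each choice of finite subset $F \subset M$ (serving as a net) and scale $t > 0$, this construction associates a finite graph $G(F, t)$ whose edges arise from pushing points of $F$ by a fixed generating set of $\Gamma$ and reidentifying with the nearest net point at resolution $\sim t^{-1}$. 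By Proposition~\ref{prop:uniform.gap.implies.no.embedding}, any sequence $(G(F_n, t_n))$ with $t_n \to \infty$ whose pairs $(F_n, t_n)$ satisfy a suitable set of uniform geometric hypotheses (bounded degree, net regularity, and compatibility between cardinality and scale) forms a superexpander.

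Given the sequence $(c_n)$, I would then choose, for each sufficiently large $n$, the scale $t_n \asymp c_n^{1/\dim(M)}$ together with a subset $F_n \subset M$ of exact cardinality $c_n$, obtained by starting from a maximal $t_n^{-1}$-net of $M$ (whose cardinality is comparable to $c_n$ up to a multiplicative constant depending only on $M$) and then adding or deleting a bounded number of points. This preserves the net property up to a uniform constant and yields a valid sequence of approximating graphs of the warped cone $\cO_\Gamma(M)$ at diverging scales. For the small values of $n$ where $c_n$ is too small to support such an approximation, I would simply pad the sequence with arbitrary connected graphs on $c_n$ vertices; since being a superexpander is a tail property --- any finite collection of finite graphs trivially admits uniform non-embeddability bounds --- this has no effect on the conclusion.

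The main technical point is to verify that the slightly perturbed nets $F_n$ of exact cardinality $c_n$ still satisfy the uniform hypotheses of Proposition~\ref{prop:uniform.gap.implies.no.embedding}. This reduces to showing that adding or deleting a bounded number of points from a maximal $\epsilon$-net preserves both the uniform degree bound of the approximating graph (which depends only on the generating set of $\Gamma$ and the doubling dimension of $M$) and the uniform spectral gap inherited from the strong Banach property (T) of the action. Both stability statements are routine consequences of the geometric analysis developed in \cite{vigolo1}; the only obstacle I anticipate is the bookkeeping required to track the various constants and confirm that they remain uniform across the whole sequence, regardless of how slowly $(c_n)$ grows.
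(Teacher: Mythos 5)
Your overall strategy --- fix the action $\Gamma_d\curvearrowright\mathrm{SO}(d,\RR)$, choose the scale $t_n\asymp c_n^{1/\dim(M)}$, and adjust the net indexing the vertices of the approximating graph so as to hit the cardinality $c_n$ exactly --- is the same as the paper's, and padding the initial terms with arbitrary connected graphs is a legitimate way to deal with values of $c_n$ too small to be realized (the paper does not even comment on this). The gap is in the adjustment step, and it contradicts your own setup: a maximal $\frac{1}{t_n}$\=/separated subset $Y\subset M$ only satisfies $\frac{1}{C}c_n\leq |Y|\leq Cc_n$ for a \emph{multiplicative} constant $C=C(M)>1$ coming from the volume comparison of Section \ref{ssec:small.balls.in.mflds}, so the discrepancy between $|Y|$ and $c_n$ can be of order $c_n$, not bounded. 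Adding or deleting a number of points proportional to $c_n$ is not harmless: deleting a cluster of $k$ points from an $\epsilon$\=/separated, $\epsilon$\=/dense set can leave a hole of radius $\gtrsim \epsilon k^{1/\dim(M)}$, destroying density (hence the connectivity of the approximating graph and the $Q$\=/bounded measure ratios of the associated partition), while inserting arbitrary points destroys separation (hence the degree bound). Note also that what must be preserved under such a perturbation is not the spectral gap --- that is a property of the action alone and does not see the net --- but precisely the two uniform hypotheses of Proposition \ref{prop:uniform.gap.implies.no.embedding}: bounded measure ratios and bounded degree.

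The paper closes this gap by interpolation between nested nets rather than by perturbing a single net. The construction of \cite[Theorem 5.5]{vigolo1} accepts as vertex set any $\frac{1}{t}$\=/dense, $\frac{1}{2t}$\=/separated subset, and if $Y_1$ is $\frac{1}{t}$\=/dense and $\frac{1}{t}$\=/separated while $Y_2\supset Y_1$ is $\frac{1}{2t}$\=/dense and $\frac{1}{2t}$\=/separated, then \emph{every} intermediate set $Y_1\subseteq Y\subseteq Y_2$ is automatically $\frac{1}{t}$\=/dense and $\frac{1}{2t}$\=/separated. Taking a nested sequence $Y_1\subset Y_2\subset\cdots$ in which the $j$\=/th set is $\frac{1}{2^jt}$\=/dense and $\frac{1}{2^jt}$\=/separated, one realizes every cardinality in the interval from $|Y_j|$ to $|Y_{j+1}|$, hence every sufficiently large integer, with uniform control on both density and separation. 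Replacing your ``add or delete boundedly many points'' step by this interpolation argument repairs the proof and brings it in line with the paper's.
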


After the completion of this article, Fisher, Nguyen and van Limbeek \cite{fishernguyenvanlimbeek} studied the quasi-isometry types of expanders constructed from group actions on homogeneous spaces through the warped cone construction. Their approach gives rise to continua of ``quasi-isometrically disjoint'' superexpanders, meaning that these expanders do not have quasi-isometric subsequences. This broadly generalizes Theorem \ref{thm:infinitelymanysuperexpanders}. The profound rigidity result in \cite{fishernguyenvanlimbeek} implying the difference of the quasi-isometry types of the expanders relies on the computation of their coarse fundamental group. The computation of the coarse fundamental group was independently considered by the second-named author \cite{vigolo2}. In any case, we think that the approach of this article has its own value, mainly because it relies on elementary techniques and because it clarifies some aspects of the local geometry of warped cones.

This article is organized as follows. Section \ref{sec:preliminaries} deals with some background and preliminaries. Our result on the non-embeddability of approximating graphs is proven in Section \ref{sec:nonembeddability}. In Section \ref{sec:actions}, we study examples of actions of groups with strong Banach property (T) on compact Riemannian manifolds, and in Section \ref{sec:coarsegeometry}, we prove that the expanders constructed from these actions are not coarsely equivalent to a Lafforgue expander. Moreover, we prove Theorem \ref{thm:infinitelymanysuperexpanders}, Theorem \ref{thm:intro.qi.rigidity} and Theorem \ref{thm:intro.arbitrary.cardinality} in Section \ref{sec:coarsegeometry} (these are Theorem \ref{thm:warped.cones.not.qi.to.Lafforgue}, Theorem \ref{thm:qi.rigidity.of.warped.cones} and Corollary \ref{cor:expanders.arbitrary.cardinality}, respectively).

\section*{Acknowledgements}
The authors thank Pierre-Emmanuel Caprace, Cornelia Dru\c{t}u, Masato Mimura, Mikael de la Salle and Alain Valette for helpful suggestions and remarks. They also thank Cornelia Dru\c{t}u for comments on a preliminary version and Panagiotis Papazoglou for pointing out an error in a preliminary version.

The authors would like to thank the Isaac Newton Institute for Mathematical Sciences, Cambridge (EPSRC grant no EP/K032208/1), for support and hospitality during the programme ``Non-positive curvature, group actions and cohomology'', where part of the work on this article was undertaken. The first-named author is supported by the Deutsche Forschungsgemeinschaft (SFB 878). The second-named author is supported by the EPSRC Grant 1502483 and the J.T.Hamilton Scholarship.

\section{Preliminaries} \label{sec:preliminaries}
Throughout the paper, $\Gamma$ will always denote a finitely generated group and $S$ a finite symmetric generating set of $\Gamma$. We can assume $S$ to be fixed, as the specific choice of finite generating set will never be relevant for our statements. Actions of countable groups on probability spaces are always assumed to be measure preserving.

The group $\Gamma$ is a (discrete) metric space when equipped with the associated word metric. More generally, we always think of (the set of vertices of) a connected graph as a discrete metric space using the shortest path distance. The word metric on $\Gamma$ coincides with the distance coming from the identification of $\Gamma$ with its Cayley graph.

\subsection{Expanders and superexpanders} \label{subsec:expanders}
Let $\mathcal{G}=(V,E)$ be a finite connected graph. For $A \subset V$, let $\partial A$ be the set of edges in $E$ joining an element of $A$ with an element of $V \setminus A$. The \emph{Cheeger constant} $h(\mathcal{G})$ of $\mathcal{G}$ is defined as
\[
  h(\mathcal{G})=\min\left\{\frac{|\partial A|}{|A|} \ \middle|\ A \subset V,\;|A| \leq \frac{1}{2}|V|\right\}.
\]
The Cheeger constant is a measure of the connectivity of a graph.

Let $(\mathcal{G}_n)$ be a sequence of finite connected graphs with degrees bounded by a constant $D$ independent of $n$ (the \emph{degree} of a graph is the maximum of the degrees of its vertices). Then the sequence $(\mathcal{G}_n)$ is said to be an \emph{expander} if $\lim_{n \to \infty} |V_n| = \infty$ (where $V_n$ is the vertex set of $\mathcal{G}_n$) and $\exists C > 0$ such that $h(\mathcal{G}_n) \geq C$ for all $n$. When this is the case we also say that the graphs $\CG_n$ form a family of expanders.\\

If $\Lambda$ is a cocompact lattice in an almost simple higher rank algebraic group over a non-Archimedean local field (note that in this case $\Gamma$ has strong Banach property (T) by \cite{MR2423763,MR2574023,MR3190138}), $(\Lambda_n)$ is a nested sequence of finite index normal subgroups of $\Lambda$ with trivial intersection and $\vert \Lambda / \Lambda_n \vert \to \infty$, and $S'$ is a generating set of $\Lambda$, then the sequence of Cayley graphs ${\rm Cay}(\Lambda/\Lambda_n,\pi_n(S'))$ is a superexpander \cite{MR2423763}. We call such expanders \emph{Lafforgue expanders}.

\begin{rem} \label{rem:superexpanders}
In some texts, a superexpander is defined as a sequence of finite connected $D$-regular graphs $\mathcal{G}_n=(V_n,E_n)$, $n \in \mathbb{N}$, such that $\lim_{n \to \infty} |V_n| = \infty$ and such that for every superreflexive Banach space $X$ there exists a constant $\gamma > 0$ such that for all $n \in \mathbb{N}$ and $f\colon V_n \to X$, the inequality
\[
	\frac{1}{|V_n|^2}\sum_{v,w \in V_n}\|f(v)-f(w)\|^2 \leq \frac{\gamma}{D|V_n|} \sum_{(v,w) \in E_n} \|f(v)-f(w)\|^2
\]
holds. This definition is stronger than the definition that we use in this article. However, from \cite{sawicki2}, it follows that the superexpanders that we consider are also superexpanders according to the definition in this remark (with the only minor caveat that these graphs need not be $D$\=/regular).
\end{rem}

\subsection{Coarse embeddings and quasi-isometries}\label{subsec:coarse.embeddings}
A map $f\colon (X,d_X)\to (Y,d_Y)$ between metric spaces is called a \emph{coarse embedding} if there are two increasing and unbounded control functions $\rho_-,\rho_+\colon [0,\infty)\to[0,\infty)$, such that
\[
 \rho_-\left(d_X(x_1,x_2)\right)\leq d_Y\left(f(x_1),f(x_2)\right)\leq \rho_+\left(d_X(x_1,x_2)\right)
\]
for every $x_1,x_2 \in X$.\\

A coarse embedding $f\colon (X,d_X)\to (Y,d_Y)$ is called a \emph{coarse equivalence} if there exists a constant $C>0$ such that for every $y \in Y$, there exists an $x \in X$ with $d_Y(y,f(x)) \leq C$.  Equivalently, a coarse embedding $f$ is a coarse equivalence if and only if there exists a coarse embedding $g\colon(Y,d_Y)\to(X,d_X)$ so that both $f\circ g$ and $g\circ f$ are at bounded distance from the identity function. \\

A family of metric spaces $(X_n,d_{X_n})$ coarsely embeds into $(Y,d_Y)$ if there are coarse embeddings $f_n \colon (X_n,d_{X_n}) \to (Y,d_Y)$ with the same control functions $\rho_-$ and $\rho_+$.\\

A map $f\colon (X,d_X)\to (Y,d_Y)$ between metric spaces is called a \emph{quasi-isometry} if there exist constants $L \geq 0$ and $A \geq 0$, such that
\[
  \frac{1}{L}d_X(x_1,x_2)-A \leq d_Y(f(x_1),f(x_2)) \leq Ld_X(x_1,x_2)+A
\]
for every $x_1,x_2 \in X$, and if for every $y \in Y$, there exists an $x \in X$ with $d_Y(y,f(x)) \leq A$. In this case we say that $f$ is an $(L,A)$\=/quasi\=/isometry.\\

Two spaces are \emph{quasi-isometric} if there exists a quasi-isometry between them. Two families $(X_n,d_{X_n})$, $(Y_n,d_{Y_n})$ of metric spaces are \emph{uniformly quasi-isometric} if there exist quasi-isometries $f_n\colon(X_n,d_{X_n}) \to (Y_n,d_{Y_n})$ with the same constants $L,A$.

Note that two connected graphs $\CG$ and $\CG'$ (and more generally, two geodesic spaces) are quasi\=/isometric if and only if they are coarsely equivalent. We will hence use the terms quasi\=/isometric and coarsely equivalent as synonyms.

\subsection{Separated subsets of metric spaces}\label{ssec:separated.subset}
Given $r>0$, we say that a subset $Y$ of a metric space $(X,d)$ is \emph{$r$\=/separated} if $d(y,y')\geq r$ for every two $y,y'\in Y$ satisfying $y\neq y'$. Note that if $Y$ is $r$\=/separated, then the open balls $B(y,\frac{r}{2})$ with $y\in Y$ are all disjoint in $X$. 

We say that $Y$ is \emph{$r$\=/dense} if the union of all the balls $B(y,r)$ with $y\in Y$ covers the whole of $X$. Note that an $r$\=/separated set is maximal (with respect to the ordering given by the inclusion) if and only if it is also $r$\=/dense. In particular, in every metric space there exist subsets that are $r$\=/separated and $r$\=/dense. 

It is easy to verify that if $Y$ is an $r$\=/separated and $R$\=/dense subset of $(X,d_X)$ and $f\colon (X,d_X)\to (X',d_{X'})$ is an $(L,A)$\=/quasi\=/isometry, then $f(Y)$ is $(\frac{r}{L}-A)$\=/separated and $(RL+2A)$\=/dense in $X'$.

\subsection{Small balls in Riemannian manifolds}\label{ssec:small.balls.in.mflds}
Let $(M,d)$ be a compact Riemannian manifold. Then for every $r>0$ we let $v_M(r)$ and $V_M(r)$ denote the minimum and maximum Riemannian volume of a ball of radius $r$ in $M$: 
\[
v_M(r)\coloneqq \min\{{\rm Vol}\big(B(x,r)\big)\mid x\in M\} \qquad V_M(r)\coloneqq \max\{{\rm Vol}\big(B(x,r)\big)\mid x\in M\}. 
\]

Note that if $Y\subset M$ is $r$\=/separated, then ${\rm Vol}(M)\geq v_M(\frac{r}{2})\abs{Y}$. If $Y$ is $R$\=/dense, then ${\rm Vol}(M)\leq V_M(R)\abs{Y}$. \\

Using the Bishop-Gunther-Gromov Volume Comparison Theorem (see e.g.~\cite{MR2088027}), we can deduce the following lemma.
\begin{lem}\label{lem:volume.comparison.balls}
Let $(M,d)$ be a compact Riemannian manifold. For every $\kappa\geq 1$, there exists a constant $C(\kappa)$ such that
\[
V_M(\kappa r)\leq C(\kappa)v_M(r)
\]
for every $r>0$.
\end{lem}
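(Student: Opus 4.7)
The plan is to use compactness of $M$ to bound its Ricci and sectional curvatures and its injectivity radius uniformly, and then to split the argument according to whether $r$ is small or large. For small $r$, both sides of the desired inequality behave like constant multiples of $r^n$ (where $n=\dim M$) by the standard volume comparison theorems, while for large $r$ the statement follows from monotonicity of $v_M$ together with the trivial upper bound ${\rm Vol}(B(x,\kappa r))\leq {\rm Vol}(M)$.

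More precisely, by compactness we may choose $K\geq 0$ such that the Ricci curvature of $M$ is bounded below by $-(n-1)K$ and the sectional curvature is bounded above by $K$, and a positive lower bound $\iota$ for the injectivity radius. Fix $r_0>0$ with $\kappa r_0\leq\min\{\iota,\pi/(2\sqrt{K})\}$ (the second term being vacuous if $K=0$). For every $r\leq r_0$ and every $x\in M$, the ball $B(x,r)$ sits inside a normal neighbourhood of $x$, so Bishop-Gunther gives ${\rm Vol}(B(x,r))\geq V^+(r)$, the volume of a ball of radius $r$ in the simply connected model space of constant sectional curvature $K$. Dually, the upper-bound side of Bishop-Gromov, applied using the Ricci lower bound, gives ${\rm Vol}(B(x,\kappa r))\leq V^-(\kappa r)$, where $V^-$ refers to the model space of constant sectional curvature $-K$. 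Both $V^+(r)$ and $V^-(\kappa r)$ are comparable to $\omega_n r^n$ and $\omega_n(\kappa r)^n$ respectively, up to bounded multiplicative factors, on the bounded interval $(0,r_0]$; hence the ratio $V^-(\kappa r)/V^+(r)$ is bounded above by some constant $C_1(\kappa)$ of order $\kappa^n$, and this yields $V_M(\kappa r)\leq C_1(\kappa)\,v_M(r)$. For $r>r_0$, the map $r\mapsto v_M(r)$ is non-decreasing (since $B(x,r)\subset B(x,s)$ for $r\leq s$), so $v_M(r)\geq v_M(r_0)=:c>0$, while $V_M(\kappa r)\leq{\rm Vol}(M)$ trivially; hence the ratio is at most $C_2:={\rm Vol}(M)/c$. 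Taking $C(\kappa):=\max\{C_1(\kappa),C_2\}$ produces the desired bound for all $r>0$.

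The only genuine subtlety is the applicability of Bishop-Gunther, which requires $r$ to stay strictly below both the injectivity radius and, when $K>0$, the first conjugate radius $\pi/\sqrt{K}$ of the comparison space; both conditions are absorbed into the definition of $r_0$ and are guaranteed by compactness of $M$. The remainder of the argument is routine bookkeeping of constants, with the factor $\kappa^n$ arising naturally from the Euclidean-volume scaling in the small-ball regime.
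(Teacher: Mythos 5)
Your proof is correct and follows exactly the route the paper intends: the paper simply invokes the Bishop--G\"unther--Gromov volume comparison theorem without proof, and your argument supplies the standard details (curvature and injectivity-radius bounds from compactness, model-space comparison in the small-radius regime, and monotonicity of $v_M$ together with the trivial bound ${\rm Vol}(M)$ for large radii). No gaps.
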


It follows that if $Y$ is $r$\=/separated and $R$\=/dense, then 
\[
\abs{Y}\leq \frac{{\rm Vol}(M)}{v_M(\frac{r}{2})}\leq C\Big(\frac{2R}{r}\Big)\abs{Y}.
\]

\subsection{Banach spectral gap}\label{subsec:Banach.gap}
Let $(X,\nu)$ be a probability space and $E$ a Banach space. The \emph{Bochner space} $B=L^2(X,\nu;E)$ is the Banach space of square-integrable functions $f\colon X \to E$ equipped with the norm
\[
 \|f\|_B=\left(\int_X \|f(x)\|_E^2\ d\nu(x)\right)^{\frac{1}{2}}.
\]
We let $L^2_0(X,\nu;E)$ be the subspace of $B$ of functions with zero mean. Any measure preserving action $\rho\colon\Gamma\curvearrowright (X,\nu)$ induces a unitary action on $B$ and on $L^2_0(X,\nu;E)$ by $\gamma\cdot f(x) = f(\gamma^{-1}\cdot x)$. We say that $\rho$ has \emph{$E$-spectral gap} if there exists a constant $\varepsilon>0$ so that for every $f\in L^2_0(X,\nu;E)$, we have $\max_{s \in S}\norm{f-s \cdot f}_B\geq\varepsilon\norm{f}_B$.

\subsection{Approximating graphs}\label{subsec:approx.graphs}
Let $(X,\nu)$ be as above. Denote by $\CP$ a partition of $X$ into finitely many disjoint measurable sets (which we call \emph{regions}). Given a constant $Q\geq 1$, we say that $\CP$ has \emph{$Q$\=/bounded measure ratios} if 
\[
 \frac{\nu(R)}{\nu(R')}\leq Q
\]
for every $R,R'\in\CP$. 

Given an action $\Gamma\curvearrowright (X,\nu)$ and a partition $\CP$, the associated \emph{approximating graph} is the graph $\CG_\CP(\Gamma\curvearrowright X)$ whose vertices are the regions in $\CP$ and two regions $R,R'\in\CP$ are connected by an edge if there exists an $s\in S$ so that $\nu\big((s\cdot R)\cap R'\big)>0$. 

If $X$ is also a metric space, then the \emph{mesh} of $\CP$ is defined as the maximum of the diameters of the regions in $\CP$. For more details on approximating graphs, we refer to \cite{vigolo1}.

\subsection{Warped cones}\label{subsec:warped.cones}
Warped cones were introduced by Roe \cite{MR2115671}. We will work with a slightly different definition, which produces families of spaces that are quasi-isometric to the level sets of the original warped cones.

Let $M$ be a compact Riemannian manifold with Riemannian metric $d$, and for every $t \geq 1$, let $d^t=td$ denote the rescaling of the Riemannian metric. As in \cite{vigolo1}, given an action $\rho\colon\Gamma\curvearrowright(M,d)$ by Lipschitz homeomorphisms, we define the \emph{$t$-level} of the \emph{warped cone} as the metric space $(M,\wdist{t})$ where $\wdist{t}$ is the \emph{warped distance}, i.e.~the largest metric satisfying
\begin{itemize}
 \item $\wdist{t}(x,y)\leq d^t(x,y)$ for every $x,y\in M$;
 \item $\wdist{t}(x,s\cdot x)\leq 1$ for every $x\in M$ and $s\in S$.
\end{itemize}
Note that the definition depends on the choice of the generating set $S$. Still, different generating sets produce quasi-isometric warped cones.

In \cite{MR2115671}, the warped cone was defined as the metric space $\CO_\Gamma(M)=(M\times[1,\infty),\wdist{})$ for some specific metric $\wdist{}$. According to our definition, the identifications $(M,\wdist{t})\to M\times \{t\}\subset \CO_\Gamma(M)$ are not isometries, but they are uniform quasi-isometries (see \cite[Lemma 6.5]{vigolo1}). 

The next result follows from combining \cite[Proposition 1.10]{MR2115671} and \cite[Section 6]{vigolo1}.
\begin{prop}
 The $t$-level sets $(M,\wdist{t})$ are uniformly quasi\=/isometric to graphs $\CG_t(\Gamma\shortacts M)$ with uniformly bounded degrees. The graphs $\CG_t(\Gamma\shortacts M)$ are graphs approximating the action $\rho$ with respect to some measurable partitions of mesh $\approx \frac{1}{t}$ and uniformly $Q$\=/bounded measure ratios.
\end{prop}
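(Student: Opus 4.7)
The plan is to construct, for each $t\geq 1$, an explicit partition $\CP_t$ of $M$ together with a quasi-isometry between the $t$-level warped cone $(M,\wdist{t})$ and the approximating graph $\CG_{\CP_t}(\Gamma\shortacts M)$, with all constants independent of $t$. First I would pick a maximal $\tfrac{1}{t}$-separated subset $Y_t\subset M$ (which is automatically $\tfrac{1}{t}$-dense, by Section \ref{ssec:separated.subset}) and let $\CP_t$ be the Voronoi partition associated to $Y_t$. By construction, every region has diameter at most $\tfrac{2}{t}$, so the mesh of $\CP_t$ is $\approx \tfrac{1}{t}$. Because each region contains a ball of radius $\tfrac{1}{2t}$ and is contained in a ball of radius $\tfrac{1}{t}$, Lemma \ref{lem:volume.comparison.balls} gives a uniform bound $Q=C(2)$ on the ratio of volumes of any two regions, regardless of $t$.

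Next I would define $\CG_t\coloneqq \CG_{\CP_t}(\Gamma\shortacts M)$ and bound its degree uniformly. Since the action of $\Gamma$ is by Lipschitz homeomorphisms, there is a constant $L\geq 1$ (depending only on the finite generating set $S$) such that $\mathrm{diam}(s\cdot R)\leq L/t$ for every $s\in S$ and every $R\in \CP_t$. Hence $s\cdot R$ is contained in a ball of radius $(L+2)/t$, and the number of regions meeting it is at most ${\rm Vol}(B(x,(L+2)/t))/v_M(\tfrac{1}{2t})$, which by Lemma \ref{lem:volume.comparison.balls} is bounded by $C(2L+4)$. Multiplying by $|S|$ gives a uniform bound on the degree of $\CG_t$.

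For the quasi-isometry, I would use the map $\varphi_t\colon (M,\wdist{t})\to \CG_t$ sending a point $x$ to the (unique up to measure zero) region containing it, with a quasi-inverse $\psi_t(R_y)\coloneqq y$. For the upper Lipschitz bound: two points in the same region are at warped distance $\leq 2/t\leq 2$, while adjacency of $R_y$ and $R_{y'}$ means there exists $s\in S$ with $(s\cdot R_y)\cap R_{y'}$ of positive measure, so picking witnesses gives $\wdist{t}(y,y')\leq \wdist{t}(y,s\cdot y)+ \text{(diameter terms)}\leq 1+O(1/t)$; iterating along a path in $\CG_t$ shows $\wdist{t}(y,y')\leq C \cdot d_{\CG_t}(R_y,R_{y'})$. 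For the lower bound, any warped path realising $\wdist{t}(x,x')$ can be subdivided so that each piece either lies in the Riemannian part (contributing a metric cost $\geq 1/t$ per region crossed) or is a group-generator step (contributing a single edge in $\CG_t$), giving $d_{\CG_t}(\varphi_t(x),\varphi_t(x'))\leq C'\wdist{t}(x,x')+C'$. Quasi-surjectivity of $\psi_t$ is immediate from $\tfrac{1}{t}$-density of $Y_t$, which forces every region to be within warped distance $\leq 2$ of its centre.

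The main obstacle is checking that the two Lipschitz estimates in the previous paragraph hold with constants independent of $t$. The key observation that makes this work uniformly is that Lipschitzness of the $\Gamma$-action bounds how much an $s$-translate can spread across regions in a way that scales correctly with $t$, and the volume comparison lemma converts this into a uniform combinatorial statement. In fact, a careful version of exactly this argument is carried out in \cite[Section 6]{vigolo1}, and once combined with \cite[Proposition 1.10]{MR2115671} (which identifies $(M,\wdist{t})$ up to uniform quasi-isometry with the restriction of Roe's warped cone to the level $M\times\{t\}$), the statement follows. So in the write-up I would either cite these two results directly or, if a self-contained proof is desired, carry out the four steps above and verify that the partitions $\CP_t$ constructed via Voronoi cells meet the requirements of Section \ref{subsec:approx.graphs}.
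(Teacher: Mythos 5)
Your proposal is correct and takes essentially the same route as the paper, which offers no independent argument and simply deduces the statement by combining \cite[Proposition 1.10]{MR2115671} with \cite[Section 6]{vigolo1} --- exactly the two references your write-up ultimately falls back on, with your Voronoi/volume-comparison sketch being a fair summary of their content. The only point to watch if you carry out the self-contained version is that the edge set of an approximating graph as defined in Section \ref{subsec:approx.graphs} comes only from the group action, so your lower Lipschitz estimate (``metric cost per region crossed'') requires the edges between metrically adjacent regions that \cite[Section 6]{vigolo1} builds into $\CG_t(\Gamma\shortacts M)$.
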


\subsection{On the geometry of Banach spaces}\label{subsec:geometry.of.Banach.spaces}
A Banach space $X$ is called \emph{uniformly convex} if $\delta_X(\varepsilon)>0$ for all $\varepsilon \in (0,2]$, where
\[
  \delta_X(\varepsilon) = \inf\left\{ 1 - \frac{\|x+y\|}{2} \;\Bigg\vert\; \|x\| \leq 1,\;\|y\| \leq 1,\;\|x-y\| \geq \varepsilon \right\}.
\]
Let $(g_i)_{i \in \N}$ be a countable family of independent complex Gaussian $\mathcal N(0,1)$ random variables on a probability space $(\Omega,\mathbb P)$. A Banach space $X$ is said to have \emph{type $p \geq 1$} if there exists a constant $T>0$ such that for every $n \in \mathbb{N}$ and $x_1,\dots,x_n \in X$, we have $\| \sum_i g_i x_i \|_{L^2(\Omega;X)} \leq T \left(\sum_i \|x_i\|^p\right)^{1/p}$.\\

Uniformly convex Banach spaces have nontrivial type, i.e.~type $>1$. The converse is known to be false. For more details on type, we refer to \cite{MR1999197}.

\subsection{Strong property (T)}\label{subsec:strong.T}
Strong Banach property (T) was introduced by Lafforgue \cite{MR2423763,MR2574023}. In this article, we use the version of strong property (T) relative to classes of Banach spaces, which is implicit in \cite{MR2574023}, and which appeared explicitly in \cite{MR3474958}. The latter article also gives a characterization of strong property (T), which is what we take as its definition in this article.

\begin{dfn}\label{def=strongT}
Let $\mathcal{E}$ be a class of Banach spaces. A locally compact group $G$ has \emph{strong property} (T) with respect to $\mathcal{E}$, denoted by (T$^{\mathrm{strong}}_{\mathcal{E}}$), if for every length function $\ell$ on $G$ there exists a sequence of compactly supported symmetric Borel measures $m_n$ on $G$ such that for every Banach space $E$ in $\mathcal E$ there exists a constant $t>0$ such that the following holds: for every strongly continuous representation $\pi\colon G \rightarrow B(E)$ satisfying $\|\pi(g)\|_{B(E)} \leq L e^{t \ell(g)}$ with $L\in\R_+$, the sequence $\pi(m_n)$ converges in the norm topology on $B(E)$ to a projection onto the $\pi(G)$-invariant vectors in $E$.
\end{dfn}
Strong Banach property (T) of Lafforgue corresponds to taking $\mathcal{E}$ to be the class of Banach spaces with nontrivial type.\\

Recall that a measure preserving action of a countable group on a probability space is \emph{ergodic} if and only if every invariant subset of the space on which the group acts has measure $0$ or $1$. We will use the following result on ergodic actions of groups with strong Banach property (T).
\begin{prop} \label{prp:ergodicactions}
 If $\Gamma$ is a finitely generated group with strong Banach property (T) of Lafforgue, then every ergodic measure preserving action $\rho\colon\Gamma\curvearrowright (X,\nu)$ has $E$\=/spectral gap for every Banach space $E$ with non\=/trivial type. Moreover, this spectral gap is uniform in the class of Banach spaces with nontrivial type, i.e.~the constant $\varepsilon$ in the definition of spectral gap in Section \ref{subsec:Banach.gap} does not depend on $E$ nor on the action.
\end{prop}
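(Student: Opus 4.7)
The plan is to apply strong Banach property (T) to the natural isometric representation on the Bochner space $L^2_0(X,\nu;E)$ induced by $\rho$, and then to convert the norm convergence of $\pi(m_n)$ to a projection into the quantitative spectral gap estimate through a telescoping argument over the word length.

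\emph{Setup.} I would let $\pi\colon\Gamma\to B(L^2(X,\nu;E))$ be the representation $(\pi(\gamma)f)(x)=f(\gamma^{-1}\cdot x)$, which restricts to $L^2_0(X,\nu;E)$. Since $\rho$ preserves $\nu$, the representation $\pi$ is isometric, so $\|\pi(\gamma)\|_{B(L^2_0)}=1\leq Le^{t\ell(\gamma)}$ trivially for any length function $\ell$ on $\Gamma$ and any $L,t\geq 0$. Applying Definition~\ref{def=strongT} to the class of Banach spaces of nontrivial type then produces a sequence of compactly supported symmetric measures $m_n$ on $\Gamma$ (depending only on $\Gamma$) such that $\pi(m_n)$ converges in operator norm to a projection onto the $\pi$-invariant subspace of $L^2_0(X,\nu;E)$.

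\emph{Trivial invariants.} Next I would verify that no nonzero $\pi$-invariant vectors exist. If $f\in L^2_0(X,\nu;E)$ is $\pi$-invariant, then for every $\phi\in E^*$ the scalar function $\phi\circ f\in L^2(X,\nu)$ is invariant under the induced unitary action of $\Gamma$, and so essentially constant by ergodicity of $\rho$. Since Bochner-measurable maps have essentially separable range, testing against a countable total family of functionals forces $f$ itself to be essentially constant, and the zero-mean condition yields $f=0$. Therefore $\pi(m_n)\to 0$ in operator norm.

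\emph{Spectral gap.} Fix $n_0$ with $\|\pi(m_{n_0})\|_{B(L^2_0)}\leq\tfrac{1}{2}$ and set $K:=\int_\Gamma\ell_S(\gamma)\,dm_{n_0}(\gamma)$, which is finite because $m_{n_0}$ has compact support. For any $\gamma=s_1\cdots s_k\in\Gamma$ with $k=\ell_S(\gamma)$, the telescoping $\pi(\gamma)f-f=\sum_{i=0}^{k-1}\pi(s_1\cdots s_i)\bigl(\pi(s_{i+1})f-f\bigr)$ combined with the fact that $\pi$ is isometric gives $\|\pi(\gamma)f-f\|\leq \ell_S(\gamma)\max_{s\in S}\|\pi(s)f-f\|$. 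Hence for every $f\in L^2_0(X,\nu;E)$,
\begin{align*}
 \tfrac{1}{2}\|f\|\leq\|f-\pi(m_{n_0})f\| &\leq \int_\Gamma\|f-\pi(\gamma)f\|\,dm_{n_0}(\gamma)\\
 &\leq K\max_{s\in S}\|f-\pi(s)f\|,
\end{align*}
so we obtain $E$-spectral gap with constant $\varepsilon:=\tfrac{1}{2K}$.

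\emph{Uniformity.} The measures $m_n$, and thus $K$, depend only on $\Gamma$ and not on $E$ or $\rho$. The main obstacle is to argue that $n_0$ can also be chosen uniformly. The quantitative versions of strong Banach property (T) in \cite{MR2423763,MR2574023,MR3474958} provide exactly what is needed: $(\pi(m_n))_n$ is Cauchy in $B(E)$ \emph{uniformly} over all isometric representations $\pi$ on all Banach spaces of nontrivial type (equivalently, the $m_n$ form a Cauchy sequence in the universal completion of $\ell^1(\Gamma)$ dictated by this class). This uniformity lets the threshold $n_0$ be picked once and for all, yielding a single constant $\varepsilon$ that works for every ergodic measure-preserving action $\rho$ and every $E$ with nontrivial type.
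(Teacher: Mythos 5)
Your argument is the standard one and is essentially the argument the paper gestures at (its own ``proof'' is little more than a citation: strong property (T) relative to a class implies property (T$_E$) uniformly for $E$ in that class, which by definition gives uniform spectral gap); your write-up supplies the details. Two points should be made explicit. First, Definition~\ref{def=strongT} applies only to representations on Banach spaces \emph{belonging to the class} $\mathcal{E}$, and your representation acts on $L^2_0(X,\nu;E)$, not on $E$ itself; you therefore need the standard fact that $L^2(X,\nu;E)$ has nontrivial type whenever $E$ does (its type is $\min(2,p)$ when $E$ has type $p$), which is precisely why the class of spaces with nontrivial type is well suited to this argument --- for an arbitrary class $\mathcal{E}$ one would need stability under forming Bochner $L^2$-spaces. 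Second, your inequality $\|f-\pi(m_{n_0})f\|\leq\int_\Gamma\|f-\pi(\gamma)f\|\,dm_{n_0}(\gamma)$ uses that $m_{n_0}$ is a \emph{probability} measure; this is part of the standard formulations of strong property (T) even though the word is omitted from Definition~\ref{def=strongT} as stated. Granting these, the rest is fine: the ergodicity step (via Pettis measurability and a countable separating family of functionals) correctly kills the invariant vectors, the telescoping estimate gives $\varepsilon=1/(2K)$, and the uniformity of the threshold $n_0$ over all isometric representations and all $E\in\mathcal{E}$ is, as you say, exactly the quantitative/uniform content of strong property (T) (Cauchyness in the universal completion of $C_c(G)$) that the paper itself invokes when it says the implication holds ``uniformly for $E\in\mathcal{E}$''.
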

This result is well known to experts. It follows from the fact that strong property (T) relative to a class $\mathcal{E}$ implies property (T$_E$) uniformly for $E \in \mathcal{E}$. By definition, the latter property implies uniform $E$-spectral gap. For the specific case of classes of uniformly convex Banach spaces and isometric representations on these spaces, the result also follows from \cite[Theorem 4.6]{drutunowak}.

\section{The non-embeddability result} \label{sec:nonembeddability}
Let $\Gamma$ be a finitely generated group with finite symmetric generating set $S$, let $\Gamma\curvearrowright(X,\nu)$ be a measure preserving action on a probability space, and let $E$ be a Banach space. A function $\hat f\colon \CG_\CP(\Gamma\curvearrowright X)\to E$ (here $\hat f$ is defined on the vertices of the graph) naturally induces a function $f\colon X \to E$ assigning to a point $x\in X$ the value of the region $R_x\in\CP$ containing it.
\begin{lem}\label{lem:upper.bound}
 If $\hat f$ is a coarse embedding with control functions $\rho_-$ and $\rho_+$, then for every $s\in S$ we have $\norm{f-s\cdot f}_B\leq \rho_+(1)$.
\end{lem}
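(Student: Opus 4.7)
The plan is to expand the Bochner norm into an integral, decompose that integral according to which pair of regions $(R,R')$ contains $x$ and $s^{-1}\cdot x$, and use the fact that for pairs occurring with positive measure the regions are joined by an edge of the approximating graph.

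First I would unpack the definitions: by definition of the action on $B = L^2(X,\nu;E)$ and of $f$, one has
\[
 \norm{f - s\cdot f}_B^2 = \int_X \norm{f(x)-f(s^{-1}\cdot x)}_E^2\, d\nu(x).
\]
For each ordered pair of regions $R,R'\in\CP$, let $A_{R,R'} \coloneqq R\cap (s\cdot R') = \{x\in R : s^{-1}\cdot x\in R'\}$. Up to a null set, $X$ is partitioned by the sets $A_{R,R'}$, and on each such set one has $f(x) = \hat f(R)$ and $f(s^{-1}\cdot x) = \hat f(R')$.

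The key step is then the edge observation: if $\nu(A_{R,R'})>0$, so that $\nu((s\cdot R')\cap R)>0$, then by the definition of the approximating graph $\CG_\CP(\Gamma\curvearrowright X)$ the vertices $R$ and $R'$ are joined by an edge, hence have distance at most $1$ in $\CG_\CP(\Gamma\curvearrowright X)$. The upper control function of the coarse embedding $\hat f$ then yields
\[
 \norm{\hat f(R) - \hat f(R')}_E \leq \rho_+(1).
\]
(For pairs with $\nu(A_{R,R'})=0$ there is nothing to check, since they contribute $0$ to the integral.)

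Combining the two steps:
\[
 \norm{f - s\cdot f}_B^2 = \sum_{R,R'\in\CP} \int_{A_{R,R'}}\norm{\hat f(R)-\hat f(R')}_E^2\, d\nu \leq \rho_+(1)^2 \sum_{R,R'\in\CP}\nu(A_{R,R'}) = \rho_+(1)^2,
\]
since $\nu$ is a probability measure and the $A_{R,R'}$ partition $X$ up to a null set. Taking square roots gives the desired bound. There is really no conceptual obstacle here; the only point that requires a moment of care is the translation from the condition $\nu(A_{R,R'})>0$ to the existence of an $s$\=/edge between $R$ and $R'$ in $\CG_\CP(\Gamma\curvearrowright X)$, which is exactly the content of the definition of the approximating graph.
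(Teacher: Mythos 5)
Your proposal is correct and follows essentially the same route as the paper's proof: both reduce to the observation that for $\nu$\=/almost every $x$ the regions containing $x$ and $s^{\pm1}\cdot x$ are joined by an edge (hence at distance at most $1$), and then bound the integrand pointwise by $\rho_+(1)^2$. Your decomposition into the sets $A_{R,R'}$ just makes the almost-everywhere edge observation slightly more explicit; the argument is the same.
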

\begin{proof}
 Note that for $\nu$\=/almost every $x\in X$ if we let $R_x,R_{s\cdot x}\in\CP$ be the regions containing $x$ and $s\cdot x$, respectively, then $d(R_x,R_{s\cdot x})\leq 1$. Therefore, we have
 \begin{align*}
  \norm{f-s^{-1}\cdot f}_B^2 &=\int_X \norm{f(x)-f(s\cdot x)}_E^2\ d\nu(x)	\\
    &=\int_X \norm{\hat f\left(R_x\right)-\hat f\left(R_{s\cdot x})\right)}_E^2\ d\nu(x) \\
    &\leq \int_X \left(\rho_+(1)\right)^2\ d\nu=\left(\rho_+(1)\right)^2 .
 \end{align*}
\end{proof}
\begin{lem}\label{lem:lower.bound}
 If $\CP$ has $Q$\=/bounded measure ratios, and if the degree of $\CG_\CP(\Gamma\curvearrowright X)$ is $D$, and if $\hat f$ is a coarse embedding with control functions $\rho_-$ and $\rho_+$, then (when the right-hand side is defined) we have
 \[
  \norm{f}_B\geq \frac{1}{4}\,\rho_-\!\!\left(\frac{1}{\log(D)}\log\left(\frac{\abs{\CP}}{2Q}\right)-1\right).
 \]
\end{lem}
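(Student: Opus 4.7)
The plan is to produce a collection of regions $R\in\CP$ of total $\nu$-measure at least $1/2$ on which $\|\hat f(R)\|_E\geq \tfrac{1}{2}\rho_-(K)$, where $K$ denotes the argument of $\rho_-$ on the right-hand side of the claimed inequality. Once this is in place, expanding $\|f\|_B^2=\sum_{R\in\CP}\nu(R)\|\hat f(R)\|_E^2$ and keeping only the contributions from this collection immediately yields $\|f\|_B\geq \rho_-(K)/(2\sqrt{2})\geq \rho_-(K)/4$.

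To do this, I would first pick a region $R_0\in\CP$ minimising $\|\hat f(R_0)\|_E$ (which exists because $\CP$ is finite) and apply the triangle inequality in the form $\|\hat f(R)\|_E\geq \tfrac{1}{2}\|\hat f(R)-\hat f(R_0)\|_E$. Combined with the coarse-embedding hypothesis this gives $\|\hat f(R)\|_E\geq \tfrac{1}{2}\rho_-(d(R,R_0))$ for every $R\in\CP$, where $d$ is the shortest-path distance on $\CG_\CP(\Gamma\curvearrowright X)$. The problem is thereby reduced to producing many regions at graph distance at least $K$ from $R_0$.

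The second step is to control the combinatorial ball $B_k(R_0)$ of integer radius $k$. Bounded degree gives $|B_k(R_0)|\leq 1+D+\cdots+D^k\leq D^{k+1}$ for $D\geq 2$ (the case $D=1$ makes the statement essentially vacuous and can be ignored), while the $Q$-bounded measure ratios imply $\nu(R)\leq Q/|\CP|$ for every $R\in\CP$. Choosing $k=\lfloor K\rfloor$ makes $D^{k+1}\leq D^{K+1}=|\CP|/(2Q)$, so $\nu(B_k(R_0))\leq \tfrac{1}{2}$; its complement then has $\nu$-measure at least $1/2$ and consists of regions at graph distance at least $k+1\geq K$ from $R_0$, which is precisely what the first paragraph requires.

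The only subtlety is a bookkeeping one: one has to check that $k=\lfloor K\rfloor$ simultaneously satisfies $D^{k+1}\leq |\CP|/(2Q)$ (used to bound the measure of $B_k(R_0)$) and $k+1\geq K$ (so that monotonicity of $\rho_-$ transfers the distance lower bound into $\rho_-(d(R,R_0))\geq \rho_-(K)$ on the complement). Both inequalities follow at once from $K-1<k\leq K$, and the slight slack between $1/(2\sqrt{2})$ and $1/4$ leaves ample room for this rounding.
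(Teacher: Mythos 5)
Your proof is correct and rests on the same key estimate as the paper's: a ball of radius $r$ in $\CG_\CP(\Gamma\curvearrowright X)$ meets at most $D^{r+1}$ regions and hence carries measure at most $QD^{r+1}/\abs{\CP}$, which is at most $\frac12$ for the stated choice of $r$. The only difference is in the endgame: the paper starts from the Chebyshev set $\{x\mid \norm{f(x)}_E\leq 2\norm{f}_B\}$, which has measure $>\frac12$, finds two regions inside it at distance $>r$, and concludes $\rho_-(r)\leq 4\norm{f}_B$ directly, whereas you anchor at the norm\-/minimising region and integrate $\norm{\hat f}_E^2$ over the complement of its $r$\-/ball, which yields the marginally better constant $\tfrac{1}{2\sqrt{2}}$.
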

\begin{proof}
 Let $C=\norm{f}_B$. Note that the set $X_{2C}=\{x\in X\mid \norm{f(x)}_E\leq 2C\}$ has measure $\nu(X_{2C})> \frac{1}{2}$.
 
 For any $r\geq 0$ and $R\in \CP$, let $\CN_r(R)\subseteq X$ denote the union of all the regions $R'\in\CP$ with $d(R,R')\leq r$. Then it follows from our hypotheses that 
 \[
  \nu\left(\CN_r(R)\right)=\frac{\nu\left(\CN_r(R)\right)}{\nu(X)}\leq Q\frac{D^{r+1}}{\abs{\CP}}.
 \]
 In particular, if we let 
 \[
  r = \frac{1}{\log(D)}\log\left(\frac{\abs{\CP}}{2Q}\right)-1,
 \]
 then $\nu(\CN_r(R))\leq \frac{1}{2}$.
 
 Choose any region $R\subseteq X_{2C}$. By construction, there must exist another region $R'\subseteq X_{2C}$ with $d(R,R')>r$. Therefore, we have
 \[
  \rho_-(r)\leq\rho_-\left(d(R,R')\right)\leq \norm{\hat f(R)-\hat f(R')}_E
  \leq\norm{\hat f(R)}_E+\norm{\hat f(R')}_E \leq 4C, 
 \]
 whence the required inequality. 
\end{proof}
Now, let $\rho_n\colon\Gamma_n\curvearrowright (X_n,\nu_n)$ be a sequence of probability measure preserving actions, let $\CP_n$ be measurable partitions of $X_n$, all with $Q$\=/bounded measure ratios and such that all the approximating graphs $\CG_{\CP_n}(\Gamma_n \curvearrowright X_n)$ have degree at most $D$. Combining Lemma \ref{lem:upper.bound} and Lemma \ref{lem:lower.bound} immediately yields the following result.
\begin{prop}\label{prop:uniform.gap.implies.no.embedding}
 If the actions $\rho_n\colon\Gamma_n\curvearrowright (X_n,\nu_n)$ all have $E$-spectral gap with a uniform constant $\varepsilon>0$, then the approximating graphs $\CG_{\CP_n}(\Gamma_n \curvearrowright X_n)$ do not coarsely embed into $E$ uniformly.
\end{prop}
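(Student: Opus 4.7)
The plan is to argue by contradiction, using the two preceding lemmas as a bracketing pair. Lemma \ref{lem:upper.bound} will force $\norm{f_n - s\cdot f_n}_B$ to stay bounded, which combined with the uniform $E$\=/spectral gap constrains $\norm{f_n}_B$ from above; Lemma \ref{lem:lower.bound} in contrast provides a lower bound on $\norm{f_n}_B$ that diverges as $\abs{\CP_n}\to\infty$. The two bounds are incompatible.

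First, I would set up the contradiction: assume that the graphs $\CG_{\CP_n}(\Gamma_n\curvearrowright X_n)$ admit coarse embeddings $\hat f_n$ into $E$ with common control functions $\rho_-,\rho_+$; the implicit assumption $\abs{\CP_n}\to\infty$ is needed for the conclusion to be non\=/vacuous. To align with the spectral gap hypothesis, which is formulated on the zero\=/mean subspace, I would replace each $\hat f_n$ by the centred version $\hat f_n-a_n$, where $a_n=\int_{X_n} f_n\,d\nu_n$ is the weighted mean of the induced function $f_n\colon X_n\to E$. Centring is harmless, since it translates all values by a common vector and therefore preserves pairwise distances and the constants $\rho_\pm$; after this reduction, the induced $f_n$ lies in $L^2_0(X_n,\nu_n;E)$.

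Next, I would apply Lemma \ref{lem:upper.bound} to obtain $\norm{f_n-s\cdot f_n}_B\leq \rho_+(1)$ for every generator $s$, uniformly in $n$. Combining this with the uniform $E$\=/spectral gap estimate
\[
 \max_{s\in S}\norm{f_n-s\cdot f_n}_B\geq\varepsilon\norm{f_n}_B
\]
yields the uniform upper bound $\norm{f_n}_B\leq \rho_+(1)/\varepsilon$. In the other direction, Lemma \ref{lem:lower.bound}, together with the uniform hypotheses on the measure ratio $Q$ and the degree $D$, gives
\[
 \norm{f_n}_B\geq \frac{1}{4}\rho_-\!\!\left(\frac{1}{\log(D)}\log\!\left(\frac{\abs{\CP_n}}{2Q}\right)-1\right).
\]
Since $\abs{\CP_n}\to\infty$ and $\rho_-$ is unbounded, the right\=/hand side tends to infinity, contradicting the uniform upper bound derived above.

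I do not expect a genuine obstacle here: once both lemmas are in hand, the proof is a mechanical combination. The only delicate bookkeeping concerns the centring step, where one must verify that subtracting a constant vector from $\hat f_n$ preserves the shared control functions $\rho_\pm$; this is what legitimately enables the application of the $L^2_0$\=/formulated spectral gap to the induced functions.
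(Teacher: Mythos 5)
Your proposal is correct and is exactly the argument the paper intends: the paper's ``proof'' is literally the one-line remark that combining Lemma \ref{lem:upper.bound} and Lemma \ref{lem:lower.bound} yields the result, and you have filled in the details in the natural way. The centring step you single out (replacing $\hat f_n$ by $\hat f_n - a_n$ so that the induced function lies in $L^2_0$ and the spectral gap applies, noting this preserves the control functions) is indeed the only point requiring care, and you handle it correctly.
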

\begin{rem}
 Since Cayley graphs can be realized as approximating graphs, we can recover as a corollary the well\=/known fact that Lafforgue expanders do not coarsely embed into any Banach space with non\=/trivial type.
\end{rem}

\section{Actions with spectral gap} \label{sec:actions}
Let $M$ be a compact Riemannian manifold, $\rho\colon\Gamma\acts M$ an action by Lipschitz homeomorphisms that preserve the Riemannian metric, and $(t_n)$ an increasing diverging sequence in $[1,\infty)$. It was proved in \cite{vigolo1} that the graphs $\CG_{t_n}(\Gamma\shortacts M)$ quasi\=/isometric to the $t_n$\=/level sets $(M,\wdist{t_n})$ form a family of expanders if and only if $\rho$ has $\RR$-spectral gap. Proposition \ref{prop:uniform.gap.implies.no.embedding} directly implies the following strengthening.
\begin{thm}
 If the action $\rho\colon\Gamma\curvearrowright M$ has $E$-spectral gap, then for every increasing diverging sequence $(t_n)$ in $[1,\infty)$, the graphs $\CG_{t_n}(\Gamma\shortacts M)$ form an expander that does not coarsely embed into $E$.
\end{thm}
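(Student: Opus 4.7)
My plan is to deduce this theorem as an essentially immediate consequence of Proposition~\ref{prop:uniform.gap.implies.no.embedding} combined with the already-cited facts from \cite{vigolo1}. There are two assertions to verify: that the graphs $\CG_{t_n}(\Gamma\shortacts M)$ form an expander, and that they do not coarsely embed into $E$.

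For the expander part, note that $E$-spectral gap implies $\RR$-spectral gap. Indeed, fix any unit vector $e\in E$ and observe that the isometric embedding $\RR\hookrightarrow E$ sending $\lambda\mapsto \lambda e$ induces an isometric embedding $L^2_0(M,\nu;\RR)\hookrightarrow L^2_0(M,\nu;E)$ that intertwines the $\Gamma$-actions; applying the $E$-spectral gap inequality to functions in the image yields the desired gap for real-valued functions. With $\RR$-spectral gap in hand, the expander conclusion is precisely the characterization from \cite{vigolo1} for the $t_n$-levels of the warped cone, together with the quasi-isometry between $(M,\wdist{t_n})$ and $\CG_{t_n}(\Gamma\shortacts M)$ recalled in the proposition at the end of Section~\ref{subsec:warped.cones}.

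For the non-embeddability part, I would apply Proposition~\ref{prop:uniform.gap.implies.no.embedding} with $\Gamma_n=\Gamma$, $(X_n,\nu_n)=(M,\mathrm{Vol})$, $\rho_n=\rho$, and partitions $\CP_n$ those supplied by the proposition at the end of Section~\ref{subsec:warped.cones}: these have uniformly $Q$-bounded measure ratios and yield approximating graphs whose degrees are uniformly bounded by some $D$. All the actions are equal to $\rho$ and hence have the same $E$-spectral gap constant $\varepsilon>0$, so the hypotheses of the proposition are satisfied. The conclusion is that the graphs $\CG_{\CP_n}(\Gamma\shortacts M)=\CG_{t_n}(\Gamma\shortacts M)$ do not admit a uniform coarse embedding into $E$.

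The only step requiring any thought is the reduction from $E$-spectral gap to $\RR$-spectral gap needed to invoke the \cite{vigolo1} characterization for the expander conclusion; everything else is simply matching hypotheses. No single step is a genuine obstacle.
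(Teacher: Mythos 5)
Your proposal is correct and matches the paper's (unwritten) argument exactly: the paper derives the theorem as a direct consequence of Proposition~\ref{prop:uniform.gap.implies.no.embedding} together with the characterization of expanders from \cite{vigolo1} recalled just before the statement. Your explicit reduction from $E$-spectral gap to $\RR$-spectral gap via the embedding $\lambda\mapsto\lambda e$ is a detail the paper leaves implicit, and it is verified correctly.
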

We now give examples. For this, we use the approach that Margulis used to solve the Banach-Ruziewicz problem \cite{MR596890}.

For $d \geq 5$, let $\Gamma_d$ consist of matrices in $\mathrm{SO}(d,\mathbb{R})$ whose entries are elements of $\mathbb{Z}[\frac{1}{5}]$ (the subring of $\mathbb{Q}$ generated by the element $\frac{1}{5}$). Consider the diagonal embedding of $\Gamma_d$ into $G_d=\mathrm{SO}(d,\mathbb{Q}_5) \times \mathrm{SO}(d,\mathbb{R})$. Then $\Gamma_d$ is a cocompact lattice in $G_d$ \cite{MR0202718}.

Since $\mathrm{SO}(d,\mathbb{Q}_5)$ is an almost simple algebraic group of higher rank (whenever $d \geq 5$) (see \cite{MR596890}), it has Lafforgue's strong Banach property (T) (see \cite{MR3190138}). This implies that also $G_d$ has strong Banach property (T), since $\mathrm{SO}(d,\mathbb{R})$ is a compact group. Lafforgue proved that strong Banach property (T) passes to cocompact lattices \cite{MR2423763}, which implies that $\Gamma_d$ has strong Banach property (T).

The compact Lie group $\mathrm{SO}(d,\R)$ can be made into a compact Riemannian manifold of dimension $\frac{d(d-1)}{2}$ by choosing any left\=/invariant Riemannian metric on it. The group $\Gamma_d$ is in fact a dense subgroup of $\mathrm{SO}(d,\mathbb{R})$ (recall that we consider the diagonal embedding of $\Gamma_d$ into $G_d$), so $\Gamma_d$ acts ergodically and by isometries on $\mathrm{SO}(d,\R)$ by left multiplication. This observation directly implies the following result.
\begin{thm}\label{thm:expanders.from.SO}
 Let $d\geq 5$ be fixed, and let $(t_n)$ be an increasing diverging sequence. Then the graphs $\CG_{t_n}(\Gamma_d\shortacts\mathrm{SO}(d,\RR))$ approximating the $t_n$\=/levels of the warped cone of $\Gamma_d\acts\mathrm{SO}(d,\RR)$ form an expander that does not coarsely embed into any Banach space with nontrivial type. In particular, they form a superexpander.
\end{thm}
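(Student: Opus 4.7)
The plan is to combine the machinery already built up in the paper: strong Banach property (T) together with ergodicity yields a uniform Banach spectral gap, which via the theorem at the start of Section \ref{sec:actions} forbids coarse embeddings of the associated approximating graphs into the class of Banach spaces with nontrivial type.

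First I would verify that the action $\Gamma_d\curvearrowright(\mathrm{SO}(d,\RR),\nu)$ by left multiplication is ergodic with respect to the normalised Haar measure $\nu$. Since $\Gamma_d$ is a dense subgroup of the compact Lie group $\mathrm{SO}(d,\RR)$ and translation acts strongly continuously on $L^2(\mathrm{SO}(d,\RR),\nu)$, any measurable $\Gamma_d$-invariant subset is $\mathrm{SO}(d,\RR)$-invariant modulo null sets; by transitivity of $\mathrm{SO}(d,\RR)$ on itself, such a set must have measure $0$ or $1$. The left-invariance of the chosen Riemannian metric moreover ensures that the action is by isometries, hence in particular by Lipschitz homeomorphisms preserving the Riemannian metric, which is exactly the hypothesis of the framework of Section \ref{sec:actions}.

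Next, since $\Gamma_d$ has strong Banach property (T) of Lafforgue (established in the paragraphs immediately preceding the theorem), Proposition \ref{prp:ergodicactions} applies to the ergodic action above and yields, for every Banach space $E$ with nontrivial type, that $\Gamma_d\curvearrowright(\mathrm{SO}(d,\RR),\nu)$ has $E$-spectral gap (in fact uniformly in $E$, although this uniformity is not needed for the present statement). To conclude, I would invoke the theorem stated at the start of Section \ref{sec:actions}: since the action has $E$-spectral gap, for every increasing diverging sequence $(t_n)$ the graphs $\CG_{t_n}(\Gamma_d\shortacts\mathrm{SO}(d,\RR))$ form an expander that does not coarsely embed into $E$. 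Ranging over all Banach spaces $E$ of nontrivial type gives the stated non-embeddability, and the ``in particular'' clause follows because every uniformly convex Banach space has nontrivial type.

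There is no substantive obstacle: this theorem is a concrete instantiation of the general apparatus already assembled, and the work of actually producing the action with the needed spectral properties has been done in the construction of $\Gamma_d$ as a cocompact lattice in $\mathrm{SO}(d,\QQ_5)\times\mathrm{SO}(d,\RR)$ and the observation that $\mathrm{SO}(d,\QQ_5)$ is almost simple of higher rank for $d\geq 5$. The only points meriting a brief check are the standard ergodicity argument for left translation by a dense subgroup of a compact group and the identification of the warped-cone level sets with the approximating graphs $\CG_{t_n}$, which has already been recorded in Section \ref{subsec:warped.cones}.
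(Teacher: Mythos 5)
Your proposal is correct and follows exactly the route the paper takes: the paragraphs preceding the theorem establish strong Banach property (T) for $\Gamma_d$ and the ergodicity and isometry of the left-translation action, after which Proposition \ref{prp:ergodicactions} and the theorem at the start of Section \ref{sec:actions} yield the statement, just as you describe. The paper treats this as an immediate consequence (``This observation directly implies the following result''), so your slightly more explicit write-up of the ergodicity argument is fine but not a different approach.
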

\begin{rem}
Since the natural action $\mathrm{SO}(d,\RR)\acts\sphere^{d-1}\subset \R^d$ by isometries on the unit sphere restricts to an ergodic action of $\Gamma_d$, we can also deduce that for $d\geq 5$ fixed and $(t_n)$ an increasing diverging sequence, the graphs $\CG_{t_n}(\Gamma_d\shortacts\sphere^{d-1})$ approximating the $t_n$\=/levels of the warped cone of $\Gamma_d\acts\sphere^{d-1}$ form an expander that does not coarsely embed into any Banach space with nontrivial type. In particular, they form a superexpander.
\end{rem}

\section{On the coarse geometry of expanders} \label{sec:coarsegeometry}
In this section, we study expanders up to coarse equivalence. As a first remark, we wish to point out that once an expander $(\CG_n)$ is given, one can construct a continuum of non-coarsely equivalent expanders by carefully choosing subsequences of it. Similar arguments are used in \cite[Theorem 2.8]{MR3640615} and \cite[Proposition 2]{MR3658731}.

\begin{prop}\label{prop:cardinality.and.non.coarse.equivalence}
 Let $(\CG_n)$ be a family of finite graphs with uniformly bounded degree and $\abs{\CG_n}\to\infty$. Then there exists a continuum $\mathcal{I}$ of subsets $I_a\subset \NN$ such that for every pair $I_a\neq I_b$ in $\mathcal{I}$, the subsequences $(\CG_n)_{n\in I_a}$ and $(\CG_n)_{n\in I_b}$ are not uniformly coarsely equivalent. 
\end{prop}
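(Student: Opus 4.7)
The plan is to exploit a cardinality estimate for quasi\=/isometries between finite graphs of bounded degree together with an almost\=/disjoint family of subsets of $\NN$. As a first step I would record the elementary fact that if two finite connected graphs $\CG, \CG'$ of degree bounded by $D$ are $(L,A)$\=/quasi\=/isometric, then their cardinalities are comparable up to a multiplicative constant $C(L,A,D)$. Indeed, almost surjectivity covers $\CG'$ by balls of radius $A$ around the image, giving $\abs{\CG'}\leq (D+1)^A \abs{\CG}$, while almost injectivity forces the fibres of the map to sit inside balls of radius $LA$, giving $\abs{\CG'}\geq (D+1)^{-LA}\abs{\CG}$. I would just state this without grinding through the easy computation.

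Using $\abs{\CG_n}\to\infty$, I would then inductively extract a subsequence $(n_k)_{k\in\NN}$ along which cardinalities grow extremely fast, for instance $\abs{\CG_{n_{k+1}}}\geq (D+1)^{k}\abs{\CG_{n_k}}$. The virtue of such a sparse subsequence is that for any fixed $L,A$ the comparability factor $C(L,A,D)$ is eventually beaten: past some threshold $K=K(L,A)$, any two distinct $\CG_{n_k}$ and $\CG_{n_{k'}}$ in the sparse sequence have cardinalities differing by more than $C$, and can therefore not be $(L,A)$\=/quasi\=/isometric to one another.

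To produce a continuum of subsequences I would fix an almost\=/disjoint family $\{A_\alpha\}_{\alpha\in\mathcal{I}}$ of infinite subsets of $\NN$ of cardinality $\mathfrak{c}$ (a standard object, constructible for example by assigning to each real number a sequence of rationals converging to it) and set $I_\alpha\coloneqq\{n_k\mid k\in A_\alpha\}$. For $\alpha\neq\beta$, enumerating $A_\alpha=\{k^\alpha_1<k^\alpha_2<\cdots\}$ and $A_\beta=\{k^\beta_1<k^\beta_2<\cdots\}$, the infiniteness of $A_\alpha$ and $A_\beta$ combined with the finiteness of $A_\alpha\cap A_\beta$ easily implies that $k^\alpha_j\neq k^\beta_j$ for infinitely many $j$, since otherwise their tails would coincide and force an infinite intersection.

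The argument then closes by contradiction: if $(\CG_n)_{n\in I_\alpha}$ and $(\CG_n)_{n\in I_\beta}$ were uniformly quasi\=/isometric via term\=/by\=/term $(L,A)$\=/quasi\=/isometries, then the first step would force $\abs{\CG_{n_{k^\beta_j}}}$ to stay within factor $C(L,A,D)$ of $\abs{\CG_{n_{k^\alpha_j}}}$ for every $j$, while picking $j$ large with $k^\alpha_j\neq k^\beta_j$ contradicts the sparsity. I do not expect a real obstacle; the only subtle point is to decide how uniform coarse equivalence is interpreted for subsequences, but the cardinality argument is robust enough to apply whether the sequences are paired via the natural enumeration or via an arbitrary bijection between the index sets.
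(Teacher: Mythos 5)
Your proposal is correct and follows essentially the same route as the paper: a cardinality-comparison lemma for coarse equivalences between bounded-degree graphs, a sparsified subsequence along which the comparison constant is eventually beaten, and an uncountable family of index sets whose pairwise order-preserving enumerations disagree infinitely often (the paper uses sets with pairwise infinite differences, which is exactly what your almost-disjoint family provides). The only cosmetic difference is that you phrase the comparison in terms of $(L,A)$-quasi-isometry constants rather than control functions $\rho_\pm$, which is equivalent here.
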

\begin{proof}
 Choosing a subsequence if necessary, we can assume that $\abs{\CG_{n+1}}> n\abs{\CG_n}$ for every $n\in\NN$. We claim that for every choice of control functions $\rho_-$ and $\rho_+$ there is an $n_0$ large enough so that for all $n>m>n_0$, the graphs $\CG_n$ and $\CG_m$ cannot be coarsely equivalent with control functions $\rho_-$ and $\rho_+$. Indeed, suppose that there exists such a coarse equivalence $f \colon \CG_n\to\CG_m$, and let $r>0$ be large enough, so that $\rho_-(r)\geq 1$. Then the pre\=/image $f^{-1}(v)$ of any vertex $v\in\CG_m$ must have diameter at most $r$, and it follows that $f^{-1}(v)$ has cardinality at most $D^{r+1}$, where $D$ is the uniform bound on the degree. In particular, we have $m\abs{\CG_m}<\abs{\CG_n}\leq D^{r+1}\abs{\CG_m}$. Hence, to prove the claim, it is sufficient to let $n_0=D^{r+1}$.
 
 It follows from the above discussion that if $I$ and $J$ are two subsets of $\NN$ so that $I \setminus J$ is infinite, then the sequences $(\CG_n)_{n\in I}$ and $(\CG_n)_{n\in J}$ are not uniformly coarsely equivalent. To conclude the proof, it is enough to observe that there exists an uncountable family of sets $I_a\subset \NN$ so that $I_a \setminus I_b$ is infinite for every $a\neq b$. 
\end{proof}

\begin{rem} \label{rem:common.subsequences}
 One can effectively use Proposition \ref{prop:cardinality.and.non.coarse.equivalence} to produce a continuum of non-coarsely equivalent expanders. Moreover, with extra care it is also possible to find a continuum of infinite subsets $I_a\subset \NN$ such that $I_a\cap I_b$ is finite for every $a\neq b$. In turn, this produces a continuum of expanders that do not even admit coarsely equivalent subsequences. 
 
Still, this technique is rather unsatisfactory, as it is quite meaningless to compare families of graphs whose sets have very different cardinalities. Moreover, one cannot trivially use Proposition \ref{prop:cardinality.and.non.coarse.equivalence} to produce examples of non-Lafforgue superexpanders, as a subsequence of a Lafforgue expander is still a Lafforgue expander.
\end{rem}

In view of Proposition \ref{prop:cardinality.and.non.coarse.equivalence}, the following fact may be interesting on its own.

\begin{prop}
 Let $\rho\colon\Gamma\curvearrowright M$ be an action by Lipschitz homeomorphisms on a compact Riemannian manifold. Then there exists a constant $C$ depending only on $M$ such that for every $N\in\NN$ there exists a $t\in\RR$ for which
 \[
  \frac{N}{C}\leq \abs{\CG_t(\Gamma\shortacts M)}\leq CN.
 \]
\end{prop}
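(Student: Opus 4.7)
The strategy is to show that $t \mapsto |\CG_t(\Gamma\shortacts M)|$ varies slowly in a multiplicative sense: doubling $t$ multiplies the cardinality by at most a constant $C_0 = C_0(M)$. Combined with the facts that $|\CG_1|$ is a constant depending only on $M$ and that $|\CG_t| \to \infty$ as $t \to \infty$, this implies that every target $N \in \NN$ can be matched by $|\CG_t|$ up to a multiplicative constant by choosing $t$ of the form $2^k$.

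The first task is a two-sided estimate of $|\CG_t|$ in terms of $\mathrm{Vol}(M)/v_M(1/t)$, with implicit constants depending only on $M$. From Section \ref{subsec:warped.cones}, the vertices of $\CG_t$ correspond bijectively to the regions of a partition $\CP_t$ of $M$ with mesh $\approx 1/t$ and $Q$-bounded measure ratios. The mesh bound implies that each region has Riemannian volume at most $V_M(c_1/t)$, and combining this with the $Q$-bounded ratios yields the lower bound
\[
|\CG_t| \geq \frac{\mathrm{Vol}(M)}{Q \cdot V_M(c_1/t)}.
\]
The matching upper bound on $|\CG_t|$ requires a complementary lower bound on region volumes, which follows from the specifics of the construction in \cite{vigolo1}: each region contains a Riemannian ball of radius $\geq c_2/t$, so has volume at least $v_M(c_2/t)$, whence $|\CG_t| \leq \mathrm{Vol}(M)/v_M(c_2/t)$. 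Applying the volume comparison Lemma \ref{lem:volume.comparison.balls} shows that both bounds are comparable to $\mathrm{Vol}(M)/v_M(1/t)$ up to constants depending only on $M$.

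The slow variation bound then follows from another application of Lemma \ref{lem:volume.comparison.balls}: since $V_M(1/t)/v_M(1/(2t)) \leq C(2)$, the two-sided estimate gives $|\CG_{2t}| \leq C_0 |\CG_t|$ for some $C_0 = C_0(M)$. To conclude, given $N \in \NN$, if $N \leq |\CG_1|$ we take $t = 1$; otherwise let $k^*$ be minimal with $|\CG_{2^{k^*}}| \geq N$ (which exists since $|\CG_{2^k}| \to \infty$ by the lower bound of the first step). Then $|\CG_{2^{k^*-1}}| < N$, and the slow variation bound gives $|\CG_{2^{k^*}}| \leq C_0 |\CG_{2^{k^*-1}}| < C_0 N$. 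Thus $t = 2^{k^*}$ works with $C = \max(C_0, |\CG_1|)$, which depends only on $M$.

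The main obstacle is the upper bound on $|\CG_t|$: the mesh condition controls the maximum diameter of a region but not its minimum volume, so establishing this bound requires inspecting the specific construction of the approximating graphs in \cite{vigolo1}, which via maximal $r$-separated sets and Voronoi-type cells naturally provides the needed geometric lower bound on each region.
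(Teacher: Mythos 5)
Your proof is correct and follows essentially the same route as the paper: both identify $\abs{\CG_t(\Gamma\shortacts M)}$ with the cardinality of a $\frac{1}{t}$\=/separated, $\frac{1}{t}$\=/dense subset of $M$ and bound it two-sidedly by volume counting via Lemma \ref{lem:volume.comparison.balls}. The only difference is the concluding step: the paper observes $\abs{\CG_t}\asymp t^{\dim M}$ and takes $t=\sqrt[d]{N}$ directly, whereas you establish the doubling bound $\abs{\CG_{2t}}\leq C_0\abs{\CG_t}$ and run a discrete intermediate-value argument over $t=2^k$ --- marginally more general (it uses only volume doubling rather than polynomial growth) but otherwise equivalent.
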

\begin{proof}
 The graphs $\CG_t(\Gamma\shortacts M)$ are constructed in \cite{vigolo1} using a one-to-one correspondence between the vertices of $\CG_t(\Gamma\shortacts M)$ and a $\frac{1}{t}$\=/dense, $\frac{1}{t}$\=/separated subset $Y\subset M$. Since $M$ is a compact Riemannian manifold, it follows from the discussion in Section \ref{ssec:small.balls.in.mflds} that there exists a constant $C$ independent of $t$ so that 
 \[
  \frac{1}{C}t^d\leq\abs{Y}\leq Ct^d,
 \]
 where $d$ is the dimension of $M$. Letting $t\coloneqq \sqrt[d]{N}$ yields the result.
\end{proof}

\begin{rem}
 By a more careful argument, one can show that for every $N\in\NN$ it is possible to construct an approximating graph $\CG_t(\Gamma\shortacts M)$ so that $\abs{\CG_t(\Gamma\shortacts M)}=N$. A straightforward way to prove this is to note that one can construct expanders where the vertex set of $\CG_t(\Gamma\shortacts M)$ is in correspondence with an arbitrary  $\frac{1}{t}$\=/dense, $\frac{1}{2t}$\=/separated subset $Y\subset M$ (see the proof of \cite[Theorem 5.5]{vigolo1}). Moreover, if $Y_1$ is $\frac{1}{t}$\=/dense and $\frac{1}{t}$\=/separated and $Y_2\supset Y_1$ is $\frac{1}{2t}$\=/dense and $\frac{1}{2t}$\=/separated then every intermediate subset $Y_1\subseteq Y\subseteq Y_2$ is $\frac{1}{t}$\=/dense and $\frac{1}{2t}$\=/separated. Letting $Y_1\subset Y_2\subset Y_3\subset \cdots$ be a nested sequence of subsets where the $n$\=/th set is $\frac{1}{2^nt}$\=/dense and $\frac{1}{2^nt}$\=/separated (such a sequence exists by Zorn's Lemma), one can choose intermediate subsets of arbitrary (finite) cardinalities.
\end{rem}

Combining the discussion above with Theorem~\ref{thm:expanders.from.SO} we obtain Theorem~\ref{thm:intro.arbitrary.cardinality} as a corollary.

\begin{cor}\label{cor:expanders.arbitrary.cardinality}
	It is possible to construct superexpanders whose graphs have arbitrary cardinality.
\end{cor}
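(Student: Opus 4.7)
The plan is to combine Theorem~\ref{thm:expanders.from.SO} with the flexibility in the cardinality of approximating graphs described in the preceding Proposition and Remark. Fix any $d\geq 5$ and consider the ergodic isometric action $\Gamma_d\curvearrowright \mathrm{SO}(d,\RR)$. By Proposition~\ref{prp:ergodicactions}, this action has $E$\=/spectral gap uniformly in $E$ ranging over all Banach spaces with nontrivial type.

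Given an arbitrary increasing sequence $(c_n)$ of natural numbers, I would first use the preceding Proposition to select, for each $n$, a parameter $t_n$ such that the standard approximating graph $\CG_{t_n}(\Gamma_d \shortacts \mathrm{SO}(d,\RR))$ has between $c_n/C$ and $Cc_n$ vertices. Up to passing to a subsequence of the $c_n$, I may assume $t_n\to\infty$. Then, as explained in the Remark preceding the corollary, one can instead work with an intermediate subset $Y_n$ of $\mathrm{SO}(d,\RR)$ that is $\frac{1}{t_n}$\=/dense and $\frac{1}{2t_n}$\=/separated and has cardinality exactly $c_n$; this yields an approximating graph with vertex set of size $c_n$. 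By the volume-comparison bounds from Section~\ref{ssec:small.balls.in.mflds} (Lemma~\ref{lem:volume.comparison.balls}), the resulting partition still has $Q$\=/bounded measure ratios and the approximating graph has degree bounded by a constant $D$, both uniformly in $n$.

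Finally, since the underlying action $\Gamma_d\curvearrowright\mathrm{SO}(d,\RR)$ has uniform $E$\=/spectral gap across all Banach spaces $E$ with nontrivial type, Proposition~\ref{prop:uniform.gap.implies.no.embedding} applies uniformly to the sequence $\big(\CG_{t_n}(\Gamma_d\shortacts \mathrm{SO}(d,\RR))\big)$ built from these $Y_n$. This shows that the resulting sequence of graphs does not coarsely embed into any Banach space with nontrivial type, so it is in particular a superexpander, and by construction the $n$\=/th graph has exactly $c_n$ vertices.

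The only technical point that requires care is the adjustment step: making sure that when one replaces the canonical $\frac{1}{t}$\=/separated, $\frac{1}{t}$\=/dense net by an intermediate subset $Y$ with prescribed cardinality, the associated approximating graph still has uniformly bounded degree and uniformly $Q$\=/bounded measure ratios, so that Proposition~\ref{prop:uniform.gap.implies.no.embedding} may be invoked. This is a routine volume comparison using Lemma~\ref{lem:volume.comparison.balls}, since the intermediate subsets remain $\frac{1}{t}$\=/dense and $\frac{1}{2t}$\=/separated, so the relevant constants depend only on the dimension and the Riemannian metric on $\mathrm{SO}(d,\RR)$.
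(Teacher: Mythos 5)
Your proposal is correct and follows essentially the same route as the paper, which simply combines the preceding Proposition and Remark (on realizing arbitrary vertex cardinalities via intermediate $\frac{1}{t}$\=/dense, $\frac{1}{2t}$\=/separated nets) with Theorem~\ref{thm:expanders.from.SO}. The only superfluous step is "passing to a subsequence of the $c_n$": since the vertex count is comparable to $t_n^{\dim M}$ and $c_n\to\infty$, the parameters $t_n$ diverge automatically.
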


We now wish to show that the examples of superexpanders that we gave in the previous section are never coarsely equivalent to a Lafforgue expander. To do so, we first need to introduce some notation. 

In what follows, we will denote the word length of an element of $\gamma\in\Gamma$ by $\abs\gamma$ and we will equip $\Gamma$ with the \emph{right} word metric, i.e.~we define the distance between two elements $\gamma,\gamma'$ to be $\abs{\gamma'\gamma^{-1}}$.
To avoid confusion, we will generally use the notation $B_{(X,d)}(x,r)$ to denote the ball of radius $r$ centered at the point $x$ in the metric space $(X,d)$. An exception will be the ball $B_{(\Gamma,\abs{\cdot})}(e,r)$, which we will simply denote by $B_\Gamma(r)$. If $Y$ is a subset of $X$, we denote by $N_{(X,d)}(Y,r)$ its neighbourhood of radius $r$:
\[
 N_{(X,d)}(Y,r)\coloneqq \bigcup_{y\in Y}B_{(X,d)}(y,r).
\] 

Given an action $\Gamma\curvearrowright X$, we denote by $B_\Gamma(r)\cdot Y$ the union of the images of $Y$ under the elements in $B_\Gamma(r)$:
\[
B_\Gamma(r)\cdot Y\coloneqq \bigcup\{\gamma\cdot Y\mid \gamma\in\Gamma,\ \abs{\gamma} < r\}.
\]
We define the set $\chi_\Gamma^t(r)\subset X$ as follows:
\[
\chi_\Gamma^t(r)\coloneqq \Big\{x\in X\Bigmid \exists \gamma\in\Gamma,\ \abs{\gamma}\leq 6r\text{ such that }d(x,\gamma \cdot x)\leq\frac{6r}{t}\Big\}.
\]
If we consider a warped cone defined through an action $\Gamma \curvearrowright X$ by isometries, then it is easy to verify (see also \cite{sawicki1}) that the warped distance between two points $x,y\in (X,\wdist{t})$ can be expressed as
 \begin{equation}\label{eq:warped.distance}
  \wdist{t}(x,y)=\inf_{\gamma \in \Gamma}\big[d^t(x,\gamma\cdot y)+\abs{\gamma}\big]
    =\inf_{\gamma \in \Gamma}\big[t d(x,\gamma\cdot y)+\abs{\gamma}\big].
 \end{equation}
In this case, we have the following inclusion of neighbourhoods: 
\[
N_{(M,\wdist{t})}(Y,r)\subseteq B_\Gamma(r)\cdot N_{(M,d)}\Big(Y,\frac{r}{t}\Big).
\]

Now, let $(M,d)$ be a compact Riemannian manifold and $\rho\colon\Gamma\curvearrowright M$ an action by isometries. Equip the direct product $\Gamma\times\RR^k$ with the $\ell^1$\=/distance $\abs{(\gamma,v)}\coloneqq \abs{\gamma}+\norm{v}_2$. The following lemma characterizes balls in warped cones up to bi-Lipschitz equivalence. Similar observations were made in \cite[Lemma 3.8]{sawickiwu}.
\begin{lem}\label{lem:balls.lipschitz.to.Euclidean}
Let $L>1$ and $r>0$ be fixed. Then there exists a $t_0$ large enough so that for every $t\geq t_0$ and for every $x_0\notin \chi_\Gamma^t(r)$, the ball $B_{(M,\wdist{t})}(x_0,r)$ is $L$-bi-Lipschitz equivalent to the ball of radius $r$ in the product $\Gamma\times \RR^{\dim(M)}$.
\end{lem}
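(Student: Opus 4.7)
The plan is to exhibit an explicit $L$\=/bi\=/Lipschitz bijection
\[
\phi\colon B_{\Gamma\times\RR^{\dim M}}(r)\longrightarrow B_{(M,\wdist{t})}(x_0,r),\qquad (\gamma,v)\mapsto \gamma\cdot\exp_{x_0}(v/t),
\]
after identifying $T_{x_0}M\cong\RR^{\dim M}$ via an orthonormal frame. The geometric picture behind the construction is that, under the hypothesis $x_0\notin\chi_\Gamma^t(r)$, the translates $\{\gamma\cdot B_{(M,d)}(x_0,r/t)\}_{|\gamma|<r}$ are pairwise disjoint in $M$, so the warped ball decomposes into ``$\Gamma$\=/sheets'' indexed by $\gamma$, each approximately isometric to a Euclidean ball of radius $r$ via $t^{-1}\exp_{x_0}$; the factor $6$ in the definition of $\chi_\Gamma^t(r)$ is precisely what is needed to absorb all the constants arising below. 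I would fix $t_0$ large enough that $r/t_0$ is smaller than the injectivity radius of $M$ and than the threshold at which Riemannian normal coordinates distort the flat metric by more than $\sqrt{L}$; both thresholds can be chosen uniformly in $x_0$ by compactness of $M$. With this choice $\phi$ is well\=/defined, and \eqref{eq:warped.distance} applied with $\eta=\gamma^{-1}$ gives $\wdist{t}(\phi(\gamma,v),x_0)\leq\|v\|+|\gamma|<r$, so the image sits in the target ball.

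The heart of the argument is to establish the clean identity
\[
\wdist{t}\bigl(\phi(\gamma_1,v_1),\phi(\gamma_2,v_2)\bigr)
  =td\bigl(\exp_{x_0}(v_1/t),\exp_{x_0}(v_2/t)\bigr)+|\gamma_1\gamma_2^{-1}|.
\]
The upper bound follows by plugging $\eta=\gamma_1\gamma_2^{-1}$ into \eqref{eq:warped.distance} (and using that $\Gamma$ acts by isometries to cancel the extra $\gamma_1$ factor). For the matching lower bound, I first note that both points lie in the warped ball of radius $r$ around $x_0$, whence $\wdist{t}(\phi(\gamma_1,v_1),\phi(\gamma_2,v_2))<2r$; it therefore suffices to rule out every competitor $\eta$ with $td(\phi(\gamma_1,v_1),\eta\phi(\gamma_2,v_2))+|\eta|<2r$, which forces $|\eta|<2r$ and $d(\phi(\gamma_1,v_1),\eta\phi(\gamma_2,v_2))<2r/t$. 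For any such $\eta$, the composite $\beta\coloneqq\gamma_1^{-1}\eta\gamma_2$ satisfies $|\beta|\leq |\gamma_1|+|\eta|+|\gamma_2|<4r\leq 6r$, and three applications of the triangle inequality along the chain $x_0\rightsquigarrow\exp_{x_0}(v_1/t)\rightsquigarrow\beta\cdot\exp_{x_0}(v_2/t)\rightsquigarrow\beta\cdot x_0$, combined with the isometricity of the action, yield $d(x_0,\beta\cdot x_0)<\|v_1\|/t+2r/t+\|v_2\|/t<4r/t\leq 6r/t$. Since $\beta\neq e$ would then witness $x_0\in\chi_\Gamma^t(r)$, contradicting the hypothesis, one must have $\beta=e$, i.e.~$\eta=\gamma_1\gamma_2^{-1}$. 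Injectivity of $\phi$ falls out of the special case $\eta=e$, and surjectivity is direct from \eqref{eq:warped.distance}: every $y$ in the warped ball can be written as $\phi(\gamma^{-1},v)$ with $|\gamma|+\|v\|<r$.

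Turning the identity into the claimed bi\=/Lipschitz estimate is then routine. The word\=/length summand is exactly the right\=/word\=/metric distance $d_\Gamma(\gamma_1,\gamma_2)$ (since $|g|=|g^{-1}|$), while the Riemannian summand is $\sqrt{L}$\=/bi\=/Lipschitz comparable to $\|v_1-v_2\|_2$ by the choice of $t_0$; adding the two yields the full $L$\=/bi\=/Lipschitz comparison with the $\ell^1$\=/metric $|\gamma_1\gamma_2^{-1}|+\|v_1-v_2\|_2$ on $\Gamma\times\RR^{\dim M}$. I expect the main obstacle to be the ``uniqueness of the optimiser'' step in the middle paragraph: one has to rule out \emph{every} competitor $\eta$ in \eqref{eq:warped.distance}, and the specific constant $6$ in the definition of $\chi_\Gamma^t(r)$ is precisely calibrated so that medium\=/length $\eta$ are automatically defeated by the a priori bound $<2r$ on the warped distance, while the remaining short $\eta$ are excluded by the orbit\=/separation hypothesis on $x_0$.
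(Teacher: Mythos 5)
Your proof is correct and follows essentially the same route as the paper's: both arguments use the hypothesis $x_0\notin\chi_\Gamma^t(r)$ to show that the group element realizing the warped distance between two points of the ball is uniquely determined (equivalently, that the translates $\gamma\cdot B_{(M,d^t)}(x_0,3r)$ for $\abs{\gamma}<3r$ are disjoint), so that the warped ball is isometric to a ball in $(M,d^t)\times\Gamma$ with the $\ell^1$\=/metric, and then both conclude via the uniform $(1+\varepsilon)$\=/bi\=/Lipschitz property of the exponential map on balls of radius $r/t$ below a compactness\=/uniform threshold. Your version merely makes the comparison map $\phi(\gamma,v)=\gamma\cdot\exp_{x_0}(v/t)$ and the verification of the product distance formula explicit.
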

\begin{proof}
Fix $t>1$ and $x_0\notin \chi_\Gamma^t(r)$. By definition of $\chi_\Gamma^t(r)$, it follows that the balls $B_{(M,d^t)}(\gamma \cdot x_0,3r)$ with $\gamma\in B_\Gamma(3r)$ are disjoint. Note that the image $\gamma\cdot B_{(M,d^t)}(x_0,3r)$ coincides with the ball $B_{(M,d^t)}(\gamma\cdot x_0,3r)$.

Since $M$ is compact, the infimum in the equality \eqref{eq:warped.distance} is actually a minimum. Therefore, for every two points $x,y\in B_{(M,\wdist{t})}(x_0,r)$, there exist $\gamma_x,\gamma_y\in B_\Gamma(r)$ so that $\wdist{t}(x,x_0)=\abs{\gamma_x}+d^t(\gamma_x \cdot x_0,x)$ and $\wdist{t}(y,x_0)=\abs{\gamma_y}+d^t(\gamma_y \cdot x_0,y)$. Again by \eqref{eq:warped.distance}, there exists a $\gamma\in\Gamma$ with $\abs{\gamma}\leq 2r$ so that 
\[
 2r\geq\wdist{t}(x,y)=\abs{\gamma}+d^t(\gamma \cdot x,y).
\]
It follows that the point $y$ belongs to both $\gamma\gamma_x \cdot B_{(M,d^t)}(x_0,3r)$ and $\gamma_y\cdot B_{(M,d^t)}(x_0,r)$ and therefore, by construction, we must have $\gamma=\gamma_y\gamma_x^{-1}$. As a consequence, it is easy to deduce that the ball of radius $r$ centred at $x_0$ in $(M,\wdist{t})$ is actually isometric to the ball of radius $r$ centred at $(x_0,e)$ in the direct product $(M,d^t)\times\Gamma$ equipped with the $\ell^1$\=/distance.

Recall that the differential $\mathrm{d}_0\exp\colon T_{x}M\to T_{x}M$ of the Riemannian exponential map $\exp\colon T_{x}M\to M$ at the point $0\in T_{x}M$ is the identity. For every $\varepsilon>0$, there exists a $\eta(\varepsilon)$ small enough so that the restriction of $\exp$ to $B_{T_{x} M}(0,\eta)$ is a $(1+\varepsilon)$\=/bi-Lipschitz map onto $B_{(M,d)}(x,\eta)$ when $0 < \eta < \eta(\varepsilon)$ and, by compactness, we can assume that the same constant $\eta(\varepsilon)$ does the job for every point $x\in M$. 

Now, since the ball $B_{(M, d^t)}(x_0,r)$ is isometric to the ball $B_{(M,d)}(x_0,\frac{r}{t})$ with the metric rescaled by $t$, we deduce that $B_{(M, d^t)}(x_0,r)$ is $(1+\varepsilon)$\=/bi-Lipschitz equivalent to $B_{T_{x_0} M}(0,r)$ as soon as $\frac{r}{t}<\eta(\varepsilon)$. To conclude the proof it is hence enough to set $t_0\coloneqq r/\eta(L-1)$.
\end{proof}

Recall that an action on a measure space is \emph{essentially free} if the set of points with nontrivial stabilizer has measure zero. 

\begin{lem}\label{lem:small.singular.sets}
Let $\Gamma\curvearrowright M$ and $\Lambda\curvearrowright N$ be essentially free actions by isometries on compact Riemannian manifolds and let $L,A,r>0$ be fixed. If there exist increasing unbounded sequences $(t_k)$ and $(s_k)$, and  $(L,A)$\=/quasi\=/isometries $f_k\colon(M,\wdist{t_k})\to (N,\wLdist{s_k})$, then for every $k$ large enough, there exists a point $x_k\in M \setminus \chi_\Gamma^{t_k}(r)$ whose image $f_k(x_k)$ is \emph{not} in $\chi_\Lambda^{s_k}(Lr+A)$.
\end{lem}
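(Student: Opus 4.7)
The plan is to argue by contradiction. Up to extracting a subsequence, assume that $f_k(M \setminus \chi_\Gamma^{t_k}(r)) \subseteq \chi_\Lambda^{s_k}(Lr+A)$ for every $k$, and then derive a contradiction by counting points of a maximal $1$\=/separated subset of $(M,\wdist{t_k})$. The first input is that both singular sets have vanishing Riemannian measure. The definition of $\chi_\Gamma^t(r)$ must be read with the (evidently implicit) convention $\gamma\neq e$ — otherwise the set would be all of $M$ — so it is the finite union over $\gamma\in B_\Gamma(6r)\setminus\{e\}$ of the sets $\{x:d(x,\gamma x)\leq 6r/t\}$. As $t\to\infty$ each summand decreases to $\mathrm{Fix}(\gamma)$, which has zero Riemannian volume by essential freeness; hence $\mathrm{Vol}(\chi_\Gamma^t(r))\to 0$, and analogously $\mathrm{Vol}(\chi_\Lambda^s(Lr+A+\eta))\to 0$ for every fixed $\eta\geq 0$.

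Next I would discretise and compare sizes. Choose maximal $1$\=/separated subsets $W_k\subset(M,\wdist{t_k})$ and $V_k\subset(N,\wLdist{s_k})$. The sandwiches
\[
B_d(w,\tfrac{1}{2t_k})\subseteq B_{\wdist{t_k}}(w,\tfrac{1}{2}) \quad\text{and}\quad B_{\wdist{t_k}}(w,1)\subseteq B_\Gamma(1)\cdot B_d(w,\tfrac{1}{t_k}),
\]
combined with the volume comparison of Lemma~\ref{lem:volume.comparison.balls}, give $|W_k|\asymp t_k^{\dim M}$, and similarly $|V_k|\asymp s_k^{\dim N}$. Send $w$ to a nearest point $F_k(w)\in V_k$ to $f_k(w)$: the $(L,A)$\=/quasi\=/isometry condition forces every fibre of $F_k$ into a $\wdist{t_k}$-ball of radius $L(2+A)$, and the above volume estimate bounds the multiplicity of $F_k$ by a constant $B$ independent of $k$. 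Coarse surjectivity of $f_k$ yields the reverse inequality, whence $|W_k|\asymp|V_k|$ and in particular $t_k^{\dim M}\asymp s_k^{\dim N}$.

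Under the contradictory assumption, $|W_k|\leq|W_k\cap\chi_\Gamma^{t_k}(r)|+|W_k\cap f_k^{-1}(\chi_\Lambda^{s_k}(Lr+A))|$. For the first term, the disjoint Riemannian $\tfrac{1}{2t_k}$\=/balls at points of $W_k\cap\chi_\Gamma^{t_k}(r)$ all sit in the Riemannian $\tfrac{1}{2t_k}$\=/neighbourhood of $\chi_\Gamma^{t_k}(r)$, which a one\=/line triangle inequality places inside $\chi_\Gamma^{t_k}(r+\tfrac{1}{6})$; hence this term is at most $\mathrm{Vol}(\chi_\Gamma^{t_k}(r+\tfrac{1}{6}))/v_M(\tfrac{1}{2t_k})=o(t_k^{\dim M})=o(|W_k|)$. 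The bounded multiplicity bounds the second term by $B\cdot|V_k\cap N_{\wLdist{s_k}}(\chi_\Lambda^{s_k}(Lr+A),1)|$; the disjoint $\wLdist{s_k}$\=/half\=/balls at these lattice points collectively sit inside $B_\Lambda(\tfrac{3}{2})\cdot\chi_\Lambda^{s_k}(Lr+A+\tfrac{1}{2})$, and the same volume argument (using that $\Lambda$ acts by isometries) makes this $o(|V_k|)=o(|W_k|)$. Summing produces a strict inequality for $k$ large — the desired contradiction.

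The main obstacle is keeping the bookkeeping straight between the two metrics: the singular sets $\chi^\bullet(\rho)$ are defined through the Riemannian metric while the discretisation and the quasi\=/isometry $f_k$ live in the warped metric. The clean observation that makes both volume estimates above go through is that a small $d$- or $\wdist{}$\=/neighbourhood of $\chi^t(\rho)$ is absorbed by $\chi^t(\rho')$ for a slightly larger $\rho'$; since the enlargement $\rho'-\rho$ is a fixed additive constant, no measure is lost in the limit, and the counting contradiction closes.
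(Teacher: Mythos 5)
Your argument is correct and follows essentially the same route as the paper: both hinge on the fact that $\mathrm{Vol}\big(\chi^t(\rho)\big)\to 0$ as $t\to\infty$ by essential freeness, and both convert this into a counting/volume-comparison estimate for separated nets transported through the quasi\=/isometry $f_k$. The only cosmetic difference is that you discretise both manifolds and argue by contradiction, whereas the paper works with a single net $Y_k\subset M$ and shows directly that a neighbourhood of $f_k\big(\chi_\Gamma^{t_k}(r)\big)$ and the set $\chi_\Lambda^{s_k}(Lr+A)$ both have vanishing measure, so they cannot jointly cover the coarsely dense image $f_k(M)$.
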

\begin{proof}
Without loss of generality, we renormalize the Riemannian metrics so that $M$ and $N$ have volume $1$. Let $Y_k\subset (M,\wdist{t_k})$ be an $L(A+1)$\=/separated $L(A+1)$\=/dense subset. Note that the balls $B_{(M,\wdist{t_k})}\Big(y,L\frac{A+1}{2}\Big)$ with $y\in Y_k$ are disjoint, and (in the notation of Section \ref{ssec:small.balls.in.mflds}) they have volume bounded between $v_M\Big(L\frac{A+1}{2t_k}\Big)$ and $V_M\Big(L\frac{A+1}{2t_k}\Big)\abs*{B_\Gamma(L\frac{A+1}{2})}$.

Let $Z_k\subseteq Y_k$ be the subset of those points which are close to $\chi_\Gamma^{t_k}(r)$:
\[
Z_k\coloneqq \Big\{y\in Y_k\Bigmid \wdist{t_k}\big(y,\chi_\Gamma^{t_k}(r)\big)< L(A+1)\Big\}
\]
and let $\Omega_k\coloneqq N_{(M,\wdist{t_k})}\big(Z_k,L(A+1)\big)$. Then $\chi_\Gamma^{t_k}(r)$ is contained in $\Omega_k$ and $\Omega_k$ is contained in a ``small'' neighbourhood of $\chi_\Gamma^{t_k}(r)$:
\[
\Omega_k\subseteq N_{(M,\wdist{t_k})}\big(\chi_\Gamma^{t_k}(r),2L(A+1)  \big)
\subseteq B_\Gamma(2L(A+1))\cdot N_{(M,d)}\Big(\chi_\Gamma^{t_k}(r),2L\frac{A+1}{t_k}  \Big).
\]
Note that the measure of the right-hand side tends to $0$ as $k$ tends to infinity, because the sets $\chi_\Gamma^{t_k}(r)$ form a sequence of closed nested subsets that converge (in measure) to the union of the sets of fixed points of finitely many elements of $\Gamma$. 

Combining the two inequalities
\begin{align*}
&\abs{Z_k}v_M\big(L\frac{A+1}{2t_k}\big) \leq{\rm Vol}(\Omega_k)\to 0,\\
&\abs{Y_k}\abs*{B_\Gamma(L(A+1))}V_M\big(L\frac{A+1}{t_k}\big) \geq {\rm Vol}(M)=1
\end{align*}
with Lemma \ref{lem:volume.comparison.balls}, we obtain that the ratios $\abs{Z_k}/\abs{Y_k}$ tend to $0$ as $k$ goes to infinity.

Now, since $f_k$ is an $(L,A)$\=/quasi\=/isometry, the image $f_k(Y_k)$ is a $1$\=/separated $(L^2(A+1)+2A)$\=/dense subset of $(N,\wLdist{s_k})$ and we also have
\[
f_k\big(\chi_\Gamma^{t_k}(r)\big)\subseteq f_k(\Omega_k)\subseteq N_{(N,\wLdist{s_k})}\big(f_k(Z_k), L^2(A+1)+A \big).
\]
We deduce that the volume of (a neighborhood of) $f_k(\Omega_k)$ is bounded above by 
\begin{equation}\label{eqn:Z_k.lower.bound}
V_N\Big(\frac{L^2(A+1)+A}{s_k}\Big)\abs*{B_\Lambda(L^2(A+1)+A)}\abs{Z_k}.
\end{equation}

On the other hand, $1$\=/separatedness gives us an upper bound on $\abs{Y_k}$ in terms of $v_N$:
\begin{equation}\label{eqn:Y_k.upper.bound}
\abs{Y_k} v_N\Big(\frac{1}{2s_k}\Big)\leq {\rm Vol(N)}=1.
\end{equation}
Since $\abs{Z_k}/\abs{Y_k}$ tends to $0$ as $k$ tends to infinity, combining the estimates \eqref{eqn:Z_k.lower.bound} and \eqref{eqn:Y_k.upper.bound} and applying Lemma \ref{lem:volume.comparison.balls} implies that also the measure of (a neighborhood of) $f_k(\Omega_k)$ tends to $0$. As we also have that the volume of $\chi_\Lambda^{s_k}(Lr+A)\subseteq N$ tends to $0$, the statement of the lemma follows trivially.
\end{proof}

We can now prove the quasi\=/isometric rigidity result. The following theorem is inspired by \cite[Theorem 7]{MR3658731} and it directly implies Theorem \ref{thm:intro.qi.rigidity}.
\begin{thm} \label{thm:qi.rigidity.of.warped.cones}
Let $\Gamma\curvearrowright (M,d)$ be an essentially free action by isometries, $(t_k)$ an increasing diverging sequence, and $\Lambda$ a group generated by a finite set $S'$.
\begin{enumerate}[(i)] 
 \item If $\Lambda$ acts essentially freely by isometries on a compact Riemannian manifold $N$ and the sequence $\{(M,\wdist{t_k})\}_k$ is uniformly coarsely equivalent to $\{(N,\wLdist{s_k})\}$ for some increasing diverging sequence $(s_k)$, then $\Lambda\times\ZZ^{\dim(N)}$ is quasi\=/isometric to $\Gamma\times\ZZ^{\dim(M)}$.
 
 \item If there exists a sequence of finite index normal subgroups $\Lambda_k\vartriangleleft\Lambda$ with $\Lambda_{k+1}<\Lambda_k$ and $\bigcap_{k\in\NN}\Lambda_k=\{1\}$ and so that the sequence $\{{\rm Cay}(\Lambda/\Lambda_k,\pi_k(S'))\}_k$ of the Cayley graphs of the quotients is uniformly coarsely equivalent to $\{(M,\wdist{t_k})\}_k$, then $\Lambda$ is quasi\=/isometric to $\Gamma\times\ZZ^{\dim(M)}$.
\end{enumerate}
\end{thm}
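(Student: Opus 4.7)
The plan is to combine Lemma~\ref{lem:balls.lipschitz.to.Euclidean} and Lemma~\ref{lem:small.singular.sets} to reduce a quasi\=/isometry between two warped cones to a sequence of pointed quasi\=/isometries between arbitrarily large balls in the ``local models'' $\Gamma\times\RR^{\dim(M)}$ and $\Lambda\times\RR^{\dim(N)}$, and then to extract a global quasi\=/isometry of these product spaces by a pointed Arzelà--Ascoli / diagonal argument.

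For part~(i), I would fix sequences $r_j\to\infty$ and $\varepsilon_j\downarrow 0$. For each $j$, Lemma~\ref{lem:small.singular.sets} yields, for all sufficiently large $k$, a basepoint $x_{k,j}\in M\setminus\chi_\Gamma^{t_k}(r_j)$ whose image $y_{k,j}:=f_k(x_{k,j})$ avoids $\chi_\Lambda^{s_k}(L r_j+A)$. Since $t_k,s_k\to\infty$, Lemma~\ref{lem:balls.lipschitz.to.Euclidean} then provides pointed $(1+\varepsilon_j)$\=/bi-Lipschitz identifications
\[
\Phi_{k,j}\colon B_{\Gamma\times\RR^{\dim(M)}}(r_j)\to B_{(M,\wdist{t_k})}(x_{k,j},r_j),\qquad
\Psi_{k,j}\colon B_{(N,\wLdist{s_k})}(y_{k,j},L r_j+A)\to B_{\Lambda\times\RR^{\dim(N)}}(L r_j+A),
\]
mapping basepoints to basepoints. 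Extracting $k=k(j)$ diagonally, the compositions $\psi_j:=\Psi_{k(j),j}\circ f_{k(j)}\circ \Phi_{k(j),j}$ form a sequence of pointed $(L,A)$\=/quasi\=/isometries (with slowly degenerating constants) from $B_{\Gamma\times\RR^{\dim(M)}}(r_j)$ into $\Lambda\times\RR^{\dim(N)}$, whose images are $O(A)$\=/dense in the corresponding ball of radius $\approx L r_j$ on the target, thanks to the coarse density of $f_{k(j)}$.

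A standard pointed Arzelà--Ascoli extraction, using that $\Gamma\times\RR^{\dim(M)}$ is proper and the $\psi_j$ are uniformly coarsely Lipschitz, then yields a subsequential limit $\psi\colon\Gamma\times\RR^{\dim(M)}\to\Lambda\times\RR^{\dim(N)}$ which is an $(L,A)$\=/quasi\=/isometry. Since $\Gamma\times\RR^{\dim(M)}$ is quasi\=/isometric to $\Gamma\times\ZZ^{\dim(M)}$ (via the cocompact lattice inclusion $\ZZ^{\dim(M)}\hookrightarrow\RR^{\dim(M)}$) and similarly on the other side, this settles~(i). Part~(ii) proceeds identically, except that on the source side the local model at any vertex of ${\rm Cay}(\Lambda/\Lambda_k,\pi_k(S'))$ is simply a ball in $\Lambda$: since $\bigcap_k\Lambda_k=\{1\}$, for any fixed $r$ the projection $\pi_k$ is injective on $B_\Lambda(2r)$ for $k$ large enough, so balls of radius $r$ in the quotient Cayley graph are isometric to $B_\Lambda(r)$; hence $\Phi_{k,j}$ is just such an isometric identification and the limiting quasi\=/isometry is from $\Lambda$ into $\Gamma\times\RR^{\dim(M)}$.

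The hard part will be the pointed limit extraction, which demands careful bookkeeping of basepoints: one has to translate the target so that each $\Psi_{k(j),j}(y_{k(j),j})$ becomes $(e,0)$ and then verify that $\psi_j(x)$ remains bounded for every fixed $x$, so that subsequential convergence at a countable dense set is possible. A secondary subtlety is confirming that the limiting map is coarsely surjective rather than merely a coarse embedding; this uses the uniform coarse density of the $f_k$ together with the fact that the bi-Lipschitz identifications become nearly isometric as $\varepsilon_j\to 0$, and is precisely what upgrades the conclusion from a mere embedding to the full quasi\=/isometry between $\Gamma\times\ZZ^{\dim(M)}$ and $\Lambda\times\ZZ^{\dim(N)}$.
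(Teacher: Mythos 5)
Your proposal is correct and follows essentially the same route as the paper: Lemma~\ref{lem:small.singular.sets} supplies basepoints avoiding the singular sets, Lemma~\ref{lem:balls.lipschitz.to.Euclidean} identifies the corresponding balls with balls in $\Gamma\times\RR^{\dim(M)}$ and $\Lambda\times\RR^{\dim(N)}$, and a diagonal extraction produces the limiting quasi\=/isometry, with coarse surjectivity checked at the end exactly as you describe. The only cosmetic difference is that the paper passes to $\Gamma\times\ZZ^{\dim(M)}$ \emph{before} taking the limit, so that each $\hat f_r(v)$ ranges over a finite set and the ``Arzel\`a--Ascoli'' step reduces to extracting an eventually constant subsequence on each ball, avoiding the basepoint bookkeeping you flag as the hard part.
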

\begin{proof}
  We first prove the first assertion. Suppose that the levels $(M,\wdist{t_k})$ and $(N,\wLdist{s_k})$ are uniformly coarsely equivalent. Then there exists a sequence of quasi\=/isometries $f_k\colon (M,\wdist{t_k})\to (N,\wLdist{s_k})$ that are all $(L,A)$\=/quasi\=/isometries for some fixed constants $L$ and $A$.
  
  Fix an integer radius $r\in\NN$. By Lemma \ref{lem:small.singular.sets},  for every $k$ large enough there exists a point $x_k\in M\setminus \chi_\Gamma^{t_k}(r)$ such that $f_k(x_k)$ is not in $\chi_\Lambda^{s_k}(Lr+A)$. Let $y_k\coloneqq f_k(x_k)$. Fix $\varepsilon>0$ small. By Lemma \ref{lem:balls.lipschitz.to.Euclidean}, we also have that there exists a $k=k(r)$ large enough so that  the balls $B_{(M,\wdist{t_k})}(x_{k(r)},r)$ and $B_{(N,\wLdist{s_k})}(y_{k(r)},Lr+A)$ are $(1+\varepsilon)$\=/bi-Lipschitz equivalent to $B_{\Gamma\times\RR^m}(r)$ and $B_{\Lambda\times\RR^n}(Lr+A)$ respectively, where $m=\dim(M)$ and $n=\dim(N)$.

Note that the inclusion $\ZZ^d\hookrightarrow \RR^d$ is a $(\sqrt d,\sqrt d)$\=/quasi\=/isometry and that the restriction of $f_k$ to $B_{(M,\wdist{t_k})}(x_k,r)$ is an $(L,A)$\=/quasi\=/isometric embedding into $B_{(N,\wLdist{s_k})}(y_k,Lr+A)$. We then have a concatenation of quasi\=/isometric embeddings as depicted in the following diagram:
  
  \
  
  \hspace{-4.5 ex}\begin{tikzpicture}
   \matrix(m)[matrix of math nodes, row sep=3em, column sep=3em, text height=1.5ex, text depth=0.25ex]
     {B_{\Gamma\times\ZZ^m}(r) 	& B_{\Gamma\times\RR^m}(r)            & B_{(M,\wdist{t_k})}(x_{k(r)},r)  \\
     \, B_{\Lambda\times\ZZ^n}\big(\sqrt{n}(Lr+A+1)\big)   & B_{\Lambda\times\RR^n}(Lr+A)        & B_{(N,\wLdist{s_k})}(y_{k(r)},Lr+A),  \\};
     
   \draw[right hook->] (m-1-1) -- (m-1-2) node[pos=0.5,above]{\scriptsize$(\sqrt{m},\sqrt{m})$};
   \draw[<->] (m-1-2) -- (m-1-3) node[pos=0.5,above]{\scriptsize$(1+\varepsilon,0)$};
   \draw[left hook->] (m-2-2) -- (m-2-1) node[pos=0.5,above]{\scriptsize$(\sqrt{n},\sqrt{n})$};
   \draw[<->] (m-2-2) -- (m-2-3) node[pos=0.5,above]{\scriptsize$(1+\varepsilon,0)$};
   \draw[right hook->] (m-1-3) -- (m-2-3) node[pos=0.5,right]{$f_{k(r)}$};
   \draw[dashed,right hook->] (m-1-1) -- (m-2-1) node[pos=0.5,right]{$\hat f_r$};
  \end{tikzpicture}

  \
  
  \noindent  where $\hat f_r$ is defined as the composition and the labels represent the quasi\=/isometry constants. Then $\hat f_r$ is a $(L',A')$\=/quasi\=/isometric embedding where $L'=\sqrt{nm}L$ and $A'=\sqrt{n}(\sqrt{m}L+A+1)$ (if the $\varepsilon$ coming from the bi-Lipschitz map is small enough, we can ignore it altogether because the distances in $\Gamma\times\ZZ^m$ and $\Lambda\times\ZZ^n$ take integer values).
  
  We thus obtained a sequence of uniform quasi\=/isometric embeddings $\hat{f}_r$. Note that by construction, $\hat{f}_r$  sends the identity element of $\Gamma\times\ZZ^m$ to the identity element of $\Lambda\times\ZZ^n$. It follows that for every fixed vertex $v\in\Gamma$, the image $\hat{f}_r(v)$ can only take finitely many values in $\Lambda\times\ZZ^n$ and hence there exists a subsequence $\hat{f}_{r_l}$ such that $\hat{f}_{r_l}(v)$ is constant.
  
 Using a diagonal argument, we can pass to a subsequence $\hat f_{r_i}$ such that for every $i>j$ the restriction of $\hat f_{r_i}$ to the ball $B_{\Gamma\times\ZZ^m}(j)$ coincides with $\hat f_{r_j}$. It follows that setting $\hat{f}|_{B_{\Gamma\times\ZZ^m}(i)}\coloneqq \hat{f}_{r_i}$ gives a well defined $(L',A')$\=/quasi\=/isometric embedding $\hat f\colon \Gamma\times\ZZ^m\to\Lambda\times\ZZ^n$.
  
  It only remains to show that $\hat f$ is coarsely surjective. This is easily done, because if $g$ is any quasi\=/isometry and $R$ is any radius, then there exists an $R'\geq R$ large enough so that the image $g\big(B(x,R')\big)$ is coarsely dense in $B\big(g(x),R\big)$. As $\hat f_{r}$ is defined as a composition of (restrictions of) quasi\=/isometries, it follows that  for every $R>0$ the image of $\hat f_r$ is coarsely dense in $B_{\Lambda\times\ZZ^n}(R)$ for every $r$ large enough and therefore the same holds true for $\hat f$.
  
  The proof of (ii) follows the same lines. Indeed, since the subgroups $\Lambda_i$ are a nested sequence with trivial intersection, for every $r\in\NN$ there is a $k=k(r)$ large enough so that the ball of radius $Lr+A$ in ${\rm Cay}(\Lambda/\Lambda_k, \pi_k(S'))$ and the ball of radius $Lr+A$ in ${\rm Cay}(\Lambda,S')$ are isometric. One can hence fix some points $x_k\in M\setminus \chi_\Gamma^{t_k}(r)$ and consider the diagram 
  \begin{center}
  \begin{tikzpicture}
   \matrix(m)[matrix of math nodes, row sep=3em, column sep=3em, text height=1.5ex, text depth=0.25ex]
     {B_{\Gamma\times\ZZ^m}(r) 	& B_{\Gamma\times\RR^m}(r)            & B_{(M,\wdist{t_k})}(x_{k(r)},r)  \\
     \, B_{\Lambda}(Lr+A)   &        & B_{\Lambda/\Lambda_k}(Lr+A)  \\};
     
   \draw[right hook->] (m-1-1) -- (m-1-2) node[pos=0.5,above]{\scriptsize$(\sqrt{m},\sqrt{m})$};
   \draw[<->] (m-1-2) -- (m-1-3) node[pos=0.5,above]{\scriptsize$(1+\varepsilon,0)$};
   \draw[<->] (m-2-1) -- (m-2-3) node[pos=0.5,above]{\scriptsize$\cong$};
   \draw[right hook->] (m-1-3) -- (m-2-3) node[pos=0.5,right]{$f_{k(r)}$};
   \draw[dashed,right hook->] (m-1-1) -- (m-2-1) node[pos=0.5,right]{$\hat f_r$};
  \end{tikzpicture}
  \end{center}
  and argue as above.
\end{proof}
\begin{rem}
 One can modify the proofs of Lemma \ref{lem:balls.lipschitz.to.Euclidean} and Theorem \ref{thm:qi.rigidity.of.warped.cones} in order to prove that, under the same hypotheses, if the warped cones $\CO_\Gamma(M) $ and $\CO_\Lambda(N)$ as defined in \cite{MR2115671} are quasi\=/isometric, then $\Gamma\times\ZZ^{\dim(M)+1}$ is quasi\=/isometric to $\Lambda\times\ZZ^{\dim(N)+1}$.
\end{rem}
It follows from the Splitting Theorem in \cite{MR1608566} that if $\Gamma$ and $\Lambda$ are cocompact lattices in a semisimple algebraic group with no rank\=/one simple factors and $\Gamma\times \ZZ^m$ is quasi\=/isometric to $\Lambda\times\ZZ^n$, then $m=n$ and $\Gamma$ is quasi\=/isometric to $\Lambda$.
\begin{cor}\label{cor:non.qi.expanders.from.SO}
 Any two expanders $\CG_{t_n}(\Gamma_d\curvearrowright\mathrm{SO}(d,\RR))$ and $\CG_{s_n}(\Gamma_{d'}\curvearrowright\mathrm{SO}(d',\RR))$ as in Theorem \ref{thm:expanders.from.SO} cannot be coarsely equivalent if $d\neq d'$. Moreover, the expanders $\CG_{t_n}(\Gamma_d\curvearrowright\mathrm{SO}(d,\RR))$ cannot be coarsely equivalent to $\CG_{s_n}(\Gamma_{d}\curvearrowright \sphere^{d-1})$.
\end{cor}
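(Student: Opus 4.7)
The plan is to invoke Theorem~\ref{thm:qi.rigidity.of.warped.cones}(i) to translate a uniform coarse equivalence between the two families of expanders into a quasi\=/isometry between certain product groups, and then to apply the Splitting Theorem of \cite{MR1608566} to derive a contradiction at the level of dimensions.

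First, by the Proposition in Section~\ref{subsec:warped.cones}, the graphs $\CG_{t_n}(\Gamma_d \shortacts M)$ are uniformly quasi\=/isometric to the warped cone levels $(M,\wdist{t_n})$; hence a uniform coarse equivalence between two families of such graphs yields a uniform coarse equivalence between the corresponding sequences of warped cone levels (and since we are dealing with families of graphs, coarse equivalence means precisely uniform coarse equivalence). Next, I would verify the hypotheses of Theorem~\ref{thm:qi.rigidity.of.warped.cones}(i) for both manifolds. The left-multiplication action of $\Gamma_d\leq\mathrm{SO}(d,\RR)$ on itself, with respect to a left\=/invariant Riemannian metric, is by isometries and in fact free, since every non\=/trivial element of a group acts without fixed points on the group by left translation. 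The restriction to $\Gamma_d$ of the standard $\mathrm{SO}(d,\RR)$\=/action on $\sphere^{d-1}$ is likewise isometric, and essentially free because $\Gamma_d$ is countable and each non\=/identity rotation in $\mathrm{SO}(d,\RR)$ fixes only a proper great subsphere, a set of measure zero in $\sphere^{d-1}$.

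Theorem~\ref{thm:qi.rigidity.of.warped.cones}(i) then implies that $\Gamma_d \times \ZZ^{\dim M}$ is quasi\=/isometric to $\Gamma_{d'} \times \ZZ^{\dim N}$, where $(M,N)$ is either $(\mathrm{SO}(d,\RR),\mathrm{SO}(d',\RR))$ or $(\mathrm{SO}(d,\RR),\sphere^{d-1})$. Since $d,d'\geq 5$, the groups $\Gamma_d,\Gamma_{d'}$ are cocompact lattices in the semisimple algebraic groups $G_d=\mathrm{SO}(d,\QQ_5)\times\mathrm{SO}(d,\RR)$ and $G_{d'}$, and neither has a simple factor of rank one (both factors are higher rank once $d\geq 5$). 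The Splitting Theorem of \cite{MR1608566}, quoted just before the corollary, then forces the ranks of the $\ZZ$ factors to agree. For the first assertion this means $\tfrac{d(d-1)}{2}=\tfrac{d'(d'-1)}{2}$, whence $d=d'$ because the function $d\mapsto\tfrac{d(d-1)}{2}$ is strictly increasing for $d\geq 1$. For the second assertion it would force $\tfrac{d(d-1)}{2}=d-1$, i.e.\ $d=2$, contradicting the standing assumption $d\geq 5$.

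Essentially all of the hard work is already delegated to Theorem~\ref{thm:qi.rigidity.of.warped.cones} and to \cite{MR1608566}; what remains is really a routine verification of the essential freeness and isometric nature of the two types of actions, together with a dimension count. The only step deserving a moment of care is the passage from a coarse equivalence of the families of graphs to a \emph{uniform} coarse equivalence of the corresponding sequences of warped cone levels, which is the form in which the rigidity theorem is stated.
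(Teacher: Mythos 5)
Your proposal is correct and is essentially the paper's own argument: the corollary is stated as an immediate consequence of Theorem~\ref{thm:qi.rigidity.of.warped.cones}(i) combined with the Splitting Theorem of \cite{MR1608566} quoted just before it, exactly as you do, with the same dimension count $\tfrac{d(d-1)}{2}$ versus $\tfrac{d'(d'-1)}{2}$ (resp.\ $d-1$). One small imprecision: the compact factor $\mathrm{SO}(d,\RR)$ of $G_d$ has rank zero rather than higher rank, but the hypothesis of the Splitting Theorem only excludes rank\-/one simple factors (and $\Gamma_d$ is quasi\-/isometric to a cocompact lattice in the higher\-/rank factor $\mathrm{SO}(d,\mathbb{Q}_5)$ alone), so your conclusion stands.
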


The class of cocompact lattices in a semisimple algebraic group over a non-Archimedean local field, in which every simple factor is of higher rank, is quasi-isometrically rigid \cite{MR1608566} (see also \cite{MR1434399}). It then follows from Theorem \ref{thm:qi.rigidity.of.warped.cones} that if $\CG_{t_k}(\Gamma\curvearrowright M)$ is uniformly quasi\=/isometric to a sequence $\Lambda/\Lambda_k$, then $\Lambda$ cannot be a cocompact lattice in such an algebraic group. Indeed, otherwise we would have that $\Gamma\times\ZZ^d$ must be itself a lattice in such a group, but this is not the case as it does not have property (T). This implies the following theorem.

\begin{thm}\label{thm:warped.cones.not.qi.to.Lafforgue}
 If $\Gamma$ is a group with Lafforgue strong Banach property (T) with an ergodic essentially free action by isometries on a Riemannian manifold $M$, then the graphs $\CG_{t_n}(\Gamma \curvearrowright M)$ form a superexpander that is not coarsely equivalent to a Lafforgue expander.
\end{thm}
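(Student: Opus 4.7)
The plan is to establish the two claims separately, using ingredients already assembled in the paper.

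For the superexpander claim, the isometric action $\Gamma\curvearrowright M$ preserves the (normalized) Riemannian volume and is ergodic by hypothesis, so Proposition \ref{prp:ergodicactions} applies and provides a uniform $E$\=/spectral gap for every Banach space $E$ of nontrivial type. The first theorem of Section \ref{sec:actions} --- itself a direct application of Proposition \ref{prop:uniform.gap.implies.no.embedding} to the family $\CG_{t_n}(\Gamma\shortacts M)$, which has uniformly bounded degree and comes from partitions with uniformly $Q$\=/bounded measure ratios (see the last proposition of Section \ref{subsec:warped.cones}) --- then says that $\{\CG_{t_n}(\Gamma\shortacts M)\}$ is an expander that does not coarsely embed into any Banach space of nontrivial type. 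Since every uniformly convex space has nontrivial type, this is a superexpander.

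For the non\=/equivalence to Lafforgue expanders I would argue by contradiction. Suppose $\{\CG_{t_n}(\Gamma\shortacts M)\}$ is uniformly coarsely equivalent to some Lafforgue expander $\{\mathrm{Cay}(\Lambda/\Lambda_k,\pi_k(S'))\}$, where $\Lambda$ is a cocompact lattice in an almost simple higher\=/rank algebraic group $G$ over a non\=/Archimedean local field and $\Lambda_k\vartriangleleft\Lambda$ is a nested sequence of finite index normal subgroups with trivial intersection. Part (ii) of Theorem \ref{thm:qi.rigidity.of.warped.cones} applies immediately and yields that $\Lambda$ is quasi\=/isometric to $\Gamma\times\ZZ^{\dim(M)}$. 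Now I invoke the quasi\=/isometric rigidity of cocompact lattices in such groups \cite{MR1608566, MR1434399}: any finitely generated group quasi\=/isometric to $\Lambda$ is virtually a cocompact lattice in $G$, and in particular inherits Kazhdan's property (T). But $\Gamma\times\ZZ^{\dim(M)}$ projects onto $\ZZ$ (note that $\dim(M)\geq 1$), so it cannot have property (T), contradicting the above and completing the argument.

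The crucial step, and the main obstacle, is the passage from a uniform coarse equivalence between families of finite graphs to a single quasi\=/isometry between the infinite groups $\Lambda$ and $\Gamma\times\ZZ^{\dim(M)}$. This is precisely what Theorem \ref{thm:qi.rigidity.of.warped.cones}(ii) accomplishes, through Lemma \ref{lem:balls.lipschitz.to.Euclidean} (which identifies small balls in warped cones with Euclidean\=/times\=/$\Gamma$ balls away from the singular locus) and Lemma \ref{lem:small.singular.sets} (which shows the singular locus is asymptotically negligible), followed by a diagonal extraction. Once this translation is in place, the argument closes by combining higher\=/rank quasi\=/isometric rigidity with the elementary incompatibility between property (T) and possessing $\ZZ$ as a direct factor.
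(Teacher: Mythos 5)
Your proposal is correct and follows essentially the same route as the paper: the superexpander claim via Proposition \ref{prp:ergodicactions} and the non-embeddability theorem of Section \ref{sec:actions}, and the non-equivalence claim via Theorem \ref{thm:qi.rigidity.of.warped.cones}(ii) combined with higher-rank quasi-isometric rigidity and the observation that $\Gamma\times\ZZ^{\dim(M)}$ fails property (T).
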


The above result directly implies Theorem \ref{thm:infinitelymanysuperexpanders}.

\begin{rem}\label{rem:local.qi.rigidity}
 Theorem \ref{thm:qi.rigidity.of.warped.cones} can be used as in Corollary \ref{cor:non.qi.expanders.from.SO} to construct expanders that do not have coarsely equivalent subsequences, but the proof of the theorem is somehow implying that the expanders thus obtained are not even ``locally'' coarsely equivalent. For instance, if we assume the actions to be free we can avoid using Lemma \ref{lem:small.singular.sets} and then the proof of Theorem \ref{thm:qi.rigidity.of.warped.cones}.(i) works verbatim with the following (weaker) assumptions: there exist sequences of points $(x_k)$ in $M$ and $(y_k)$ in $N$, increasing unbounded sequences $(t_k)$ and $(s_k)$ in $[1,\infty)$, and neighbourhoods $A_k\subset(M,\wdist{t_k})$ and $B_k\subset(N,\wLdist{s_k})$ of $x_k$ and $y_k$, respectively, such that $A_k$ and $B_k$ are uniformly quasi\=/isometric and for every $r>0$ there exists a $k$ large enough so that the balls $B_{(M,\wdist{t_k})}(x_k,r)$ and $B_{(N,\wLdist{s_k})}(y_k,r)$ are contained in $A_k$ and $B_k$, respectively.
\end{rem}

\end{document}